\documentclass[11pt]{amsart}

\usepackage{amsthm}
\usepackage{graphicx,epsfig}
\usepackage{amsmath,latexsym,amssymb,verbatim}

\def\sideremark#1{\ifvmode\leavevmode\fi\vadjust{\vbox to0pt{\vss % the remark
      \hbox to 0pt{\hskip\hsize\hskip1em           %                will appear only
 \vbox{\hsize2cm\tiny\raggedright\pretolerance10000%                on the side
 \noindent #1\hfill}\hss}\vbox to8pt{\vfil}\vss}}}%

\begin{document}

\pagestyle{myheadings}

\def\eR{\mathbb{R}}

\title[Oscillatory Integrals and Fractal Dimension]{Oscillatory Integrals and Fractal Dimension}

\author{J.-P.\ Rolin, D.\ Vlah, V.\ \v{Z}upanovi\'{c}}

\begin{abstract}

We study geometrical representation of oscillatory integrals with an analytic phase function and a smooth amplitude with compact support. Geometrical properties of the curves defined by the oscillatory integral depend on the type of a critical point of the phase.
We give explicit formulas for the box dimension and the Minkowski content of these curves. Methods include Newton diagrams and the resolution of singularities.

\end{abstract}
\medskip

\date{}

\maketitle

Keywords: oscillatory integral, box dimension, Minkowski content, critical points, Newton diagram.

AMS Classification: 58K05, 42B20, (secondary 28A75, 34C15)\\
%58K05  	Critical points of functions and mappings
%42B20  	Singular and oscillatory integrals
%28A75		Length, area, volume, other geometric measure theory
%34C15		Nonlinear oscillations, coupled oscillators

\newtheorem{theorem}{Theorem}
\newtheorem*{theorem*}{Theorem} % for Theorems without numbering
\newtheorem{cor}{Corollary}
\newtheorem{prop}{Proposition}
\newtheorem{lemma}{Lemma}
\theoremstyle{remark}
\newtheorem{remark}{Remark}
\newtheorem{example}{Example}

\font\csc=cmcsc10

\def\esssup{\mathop{\rm ess\,sup}}
\def\essinf{\mathop{\rm ess\,inf}}
\def\wo#1#2#3{W^{#1,#2}_0(#3)}
\def\w#1#2#3{W^{#1,#2}(#3)}
\def\wloc#1#2#3{W_{\scriptstyle loc}^{#1,#2}(#3)}
\def\osc{\mathop{\rm osc}}
\def\var{\mathop{\rm Var}}
\def\supp{\mathop{\rm supp}}
\def\Cap{{\rm Cap}}
\def\norma#1#2{\|#1\|_{#2}}

\def\C{\Gamma}

\let\text=\mbox

\catcode`\@=11
\let\ced=\c
\def\a{\alpha}
\def\b{\beta}
\def\c{\gamma}
\def\d{\delta}
\def\g{\lambda}
\def\o{\omega}
\def\q{\quad}
\def\n{\nabla}
\def\s{\sigma}
\def\div{\mathop{\rm div}}
\def\sing{{\rm Sing}\,}
\def\singg{{\rm Sing}_\ty\,}

\def\A{{\cal A}}
\def\F{{\cal F}}
\def\H{{\cal H}}
\def\W{{\bf W}}
\def\M{{\cal M}}
\def\N{{\cal N}}
\def\S{{\cal S}}

\def\ty{\infty}
\def\e{\varepsilon}
\def\f{\varphi}
\def\:{{\penalty10000\hbox{\kern1mm\rm:\kern1mm}\penalty10000}}
\def\ov#1{\overline{#1}}
\def\D{\Delta}
\def\O{\Omega}
\def\pa{\partial}

\def\st{\subset}
\def\stq{\subseteq}
\def\pd#1#2{\frac{\pa#1}{\pa#2}}
\def\sgn{{\rm sgn}\,}
\def\sp#1#2{\langle#1,#2\rangle}

\newcount\br@j
\br@j=0
\def\q{\quad}
\def\gg #1#2{\hat G_{#1}#2(x)}
\def\inty{\int_0^{\ty}}
\def\od#1#2{\frac{d#1}{d#2}}

\def\bg{\begin}
\def\eq{equation}
\def\bgeq{\bg{\eq}}
\def\endeq{\end{\eq}}
\def\bgeqnn{\bg{eqnarray*}}
\def\endeqnn{\end{eqnarray*}}
\def\bgeqn{\bg{eqnarray}}
\def\endeqn{\end{eqnarray}}

\def\bgeqq#1#2{\bgeqn\label{#1} #2\left\{\begin{array}{ll}}
\def\endeqq{\end{array}\right.\endeqn}

\def\abstract{\bgroup\leftskip=2\parindent\rightskip=2\parindent
        \noindent{\bf Abstract.\enspace}}
\def\endabstract{\par\egroup}

\def\udesno#1{\unskip\nobreak\hfil\penalty50\hskip1em\hbox{}
             \nobreak\hfil{#1\unskip\ignorespaces}
                 \parfillskip=\z@ \finalhyphendemerits=\z@\par
                 \parfillskip=0pt plus 1fil}
\catcode`\@=11

\def\cal{\mathcal}

\def\eN{\mathbb{N}}
\def\Ze{\mathbb{Z}}
\def\Qu{\mathbb{Q}}
\def\Ce{\mathbb{C}}

\def\osd{\mathrm{osd}\,}

\section{Introduction, motivation and definitions}

This paper is a starting point of a study intended to relate the standard classification of singularities of maps with the fractal dimension and the Minkowski content of curves defined by oscillatory integrals. The close link between the theory of singularities and the investigation of oscillatory integrals is well-known, and is explained in detail in \cite{arnold-vol2}. Our purpose is to connect these notions to the analysis of fractal data of curves as it is described in \cite{tricot}. In particular we consider the \emph{box counting dimension} (also called the \emph{box dimension}), and the \emph{Minkowski content}. It is worth noticing that every rectifiable curve has a box dimension equal to $1$. Hence the box dimension is a tool to distinguish nonrectifiable curves. Notice that another commonly used fractal dimension, the Hausdorff dimension, which takes the value $1$ on every non rectifiable smooth curve, cannot distinguish between them.\\

One motivation originates in previous works, in which the behavior of a (discrete or continuous) dynamical system in the neighborhood of a singular point is analyzed through the box dimension of an orbit. For example, in \cite{zuzu}, the authors consider a family of planar polynomial vector fields, called the \emph{standard model of the Hopf-Takens bifurcation}. They prove that the box dimension of any trajectory spiraling in the neighborhood of a limit cycle of multiplicity $m$ has the box dimension $2-1/m$. They also link in \cite{zuzulien}, for a planar analytic system with a weak focus singular point, the box dimension of a spiraling trajectory and the Lyapunov coefficients of the singularity.\\

If we consider now a discrete dynamical system on the real line in the neighborhood of a fixed point, the box dimension of a discrete orbit is related to the multiplicity of the generating function. This approach, together with the standard methods combining the study of discrete and continuous systems via the use of Poincar{\'e} first return map, leads to further results (see \cite{MRZ} and \cite{zuzulien}).

It is proved in \cite{majaformal} that the formal class of an analytic parabolic diffeomorphism is fully determined by the knowledge of a fractal data of a single orbit: namely, its box dimension, its Minkowski content and another number called its \emph{residual content}.\\

Based on these considerations, it seems relevant to study the singularities of a map $f:\mathbb{R}^{n}\rightarrow\mathbb{R}^{n}$ by considering the fractal data of an oscillatory integral with a phase $f$, and its geometric representation as a plane curve parametrized by its real and imaginary part. We actually observed a relation between the type of a critical point of the phase and the box dimension of the associated curve: a ``high degeneracy'' of the critical point causes a ``big accumulation'' of the curve, which is reflected by a larger box dimension. This is the exact analogue of the phenomenon observed above for the orbits or trajectories of dynamical systems. A well-known example of this situation is the oscillatory \emph{Fresnel integral}, and its geometric representation, the \emph{Cornu spiral} (also known as \emph{clothoid} or \emph{Euler spiral}). This curve plays an important role in the problem of the construction of optimal trajectories of a planar motion with a bounded derivative of the curvature; see \cite{kostov}. Its fractal data have been computed in \cite{clothoid}. It is worth noticing that the phase function of a Fresnel integral has only non-degenerate critical points.

Our results can be summarized as follows. We consider oscillatory integrals with an analytic phase function and an amplitude with compact support. We study the graph of the oscillatory integrals $I(\tau)$, as $\tau\to\infty$, and also the curves defined in a standard way, analogously as the Cornu spiral, which are defined by the parametrization given by the real and imaginary parts of the integral $I(\tau)$. We show that the box dimension and the Minkowski content of the curves reveal the leading term of the asymptotic expansion. More precisely, the \emph{oscillation index} can be read from the box dimension, while the leading coefficient can be read from the Minkowski content in the case of Minkowski nondegeneracy. Minkowski degeneracy corresponds to a nontrivial multiplicity of the oscillation index. In particular, for phase functions of two variables, we show explicitly how to connect these notions to their Newton diagram.\\

We plan to pursue the present work in various directions. One goal is the study, from our point of view, the bifurcations in parametric families of maps and their caustics. Second, we would like to know how our results behave if we take, in the oscillatory integral, an amplitude function which is not of class $C^{\infty}$ (for example, oscillatory integrals on halfspaces). Finally, we want to develop our subject in the direction of tame, but non-analytic phase functions.\\

The main results of this paper are presented in three theorems, with respect to the dimension of the space: Theorems \ref{tm_case_n1}, \ref{tm_case_n2} and \ref{tm_case_ng2}, for $n=1$, $n=2$ and $n>2$, respectively. The main difference between the first two theorems is caused by logarithmic terms which can appear in the expansion of the integral in Theorem \ref{tm_case_n2}, while in Theorem \ref{tm_case_n1}, that is not possible. In Theorem \ref{tm_case_ng2} powers of logarithmic terms can also appear.

\subsection{The box dimension}\label{subsec_box_dim}

For $A\st\eR^N$  bounded we define the \emph{$\e$-neigh\-bour\-hood} of  $A$ as:
$
A_\e:=\{y\in\eR^N\:d(y,A)<\e\}
$.
By the \emph{lower $s$-dimensional  Minkowski content} of $A$, for $s\ge0$, we mean
$$
\M_*^s(A):=\liminf_{\e\to0}\frac{|A_\e|}{\e^{N-s}},
$$
and analogously for the \emph{upper $s$-dimensional Minkowski content} $\M^{*s}(A)$.
If $\M^{*s}(A)=\M_*^{s}(A)$, we call the common value the \emph{$s$-dimensional Minkowski content of $A$}, and denote it by $\M^s(A)$.
The lower and upper box dimensions of $A$ are
$$
\underline\dim_BA:=\inf\{s\ge0\:\M_*^s(A)=0\}
$$
 and analogously
$\ov\dim_BA:=\inf\{s\ge0\:\M^{*s}(A)=0\}$.
If these two values coincide, we call it simply the box dimension of $A$, and denote it by $\dim_BA$. This will be our situation.
If $0<\M_*^d(A)\le\M^{*d}(A)<\ty$ for some $d$, then we say
 that $A$ is \emph{Minkowski nondegenerate}. In this case obviously $d=\dim_BA$.
In the case when the lower or upper $d$-dimensional Minkowski content of $A$ is equal to $0$ or $\ty$, where $d=\dim_BA$, we say that $A$ is \emph{degenerate}.
If there exists $\M^d(A)$ for some $d$ and $\M^d(A)\in(0,\ty)$, then we say that $A$ is \emph{Minkowski measurable}.
For more details on these definitions see, e.g., Falconer \cite{falc}, and \cite{zuzu}.

\subsection{Examples of the box dimension}

\begin{enumerate}

\item  A basic example of fractal sets with a nontrivial box dimension is the $a$-string defined by $A=\{k^{-a}\colon k\in\eN\}$, where $a>0$, introduced by Lapidus; see, e.g., \cite{lapidusfrank2}. Here is $\dim_BA=1/(1+a)$.

\item Furthermore,  important examples are curves from  Tricot's formulas; see \cite[p.\ 121]{tricot}.
The box dimension of a spiral in the plane defined in the polar coordinates by $r=m\,\f^{-\a}$, $\f\ge\f_1>0$, where $\f_1$, $m>0$ and $\a\in(0,1]$ are fixed, is equal to  $2/(1+\a)$.

\item Assuming that $0<\a\le\b$, the box dimension of the graph of the function $f_{\a,\b}(x)=x^{\a}\sin(x^{-\beta})$, for $x\in(0,1]$, which is called $(\a,\b)$-chirp, is equal to $2-(\a+1)/(\b+1)$; see \cite[p.\ 121]{tricot}.
\end{enumerate}

\subsection{Oscillatory integrals}

One of the main objects of interest in this paper are the oscillatory integrals
\bgeq\label{integral}
I(\tau)=\int_{\eR^n}e^{i\tau f(x)}\phi(x) dx,\qquad \tau\in\eR ,
\endeq
where $f$ is called the phase function and $\phi$ the amplitude.

Throughout this paper in all theorems we will use the following assumptions on the phase function $f$ and the amplitude $\phi$ that we call \emph{the standard assumptions}. The amplitude function $\phi:\eR^n\to\eR$,
\begin{itemize}
\item is of class $C^\infty$,
\item is a non-negative function with compact support,
\item the point $0\in\eR^n$ is contained in the interior of the support of the function $\phi$.
\end{itemize}
The phase function $f:\eR^n\to\eR$:
\begin{itemize}
\item the point $0$ is a critical point of the function $f$,
\item $f$ is a \emph{real analytic} function in the neighborhood of its critical point~$0$,
\item the point $0$ is \emph{the only} critical point of the function $f$ in the interior of the support of the function $\phi$.
\end{itemize}

The asymptotic expansion of $I(\tau)$, as $\tau\to\infty$, depends essentially on critical points of $f$. The critical point of $f$ is a point with all partial derivatives equal to zero. The nondegenerate critical point is a point were the Hessian is regular. In that case integral (\ref{integral}) is called the Fresnel integral in the reference
Arnold et all \cite{arnold-vol2}. We use theorems  from \cite{arnold-vol2} to obtain the asymptotic expansion of $I(\tau)$ as $\tau\to\infty$, in the cases if $f$ has no critical points, has the nondegenerate or the degenerate critical point. The phase function $f$ determines exponents in the asymptotic expansion, while the amplitude function determines the coefficients.
We will discuss curves defined by the oscillatory functions
\begin{eqnarray}\label{curve}
X(\tau)&=&Re\ I(\tau),\nonumber\\
Y(\tau)&=&Im\ I(\tau),
\end{eqnarray}
for $\tau$ near $\infty$, and also the reflected functions $x(t):=X(1/t)$, $y(t):=Y(1/t)$, as $t\to 0$.

\subsection{Oscillation and singular indices}
Applying \cite[Theorem 6.3]{arnold-vol2} on (\ref{integral}) we get the asymptotic expansion
\begin{equation}\label{expansion}
I(\tau)\sim e^{i\tau f(0)}\sum_{\alpha}\sum_{k=0}^{n-1}a_{k,\alpha}(\phi)\tau^{\alpha}\left(\log\tau\right)^k,\quad\textrm{as}\ \tau\to\infty .
\end{equation}
According to the same theorem, the parameter $\alpha$ is from the set consisting of a finite set of arithmetic progressions, which depend only on the phase $\phi$, and consisting of negative rational numbers. Coefficients $a_{k,\alpha}$ depend only on the amplitude $\phi$.

The \emph{index set} of an analytic phase $f$ at a critical point is defined as the set of all numbers $\alpha$ having the property: for any neighborhood of the critical point there is an amplitude with support in this neighborhood for which in the asymptotic series (\ref{expansion}) there is a number $k$ such that the coefficient $a_{k,\alpha}$ is not equal to zero. The \emph{oscillation index} $\beta$ of an analytic phase $f$ at a critical point is the maximal number in the index set.
The \emph{multiplicity of the oscillation index} $K$ of an analytic phase $f$ at a critical point is the maximal number $k$ having the property: for any neighborhood of the critical point there is an amplitude with support in this neighborhood for which in the asymptotic series (\ref{expansion}) the coefficient $a_{k,\beta}$ is not equal to zero.

The \emph{singular index} of an analytic phase $f$ in $n$ variables at a critical point is equal to $\beta+n/2$. The \emph{multiplicity of the singular index} is the multiplicity of $\beta$.

\subsection{Oscillatory and curve dimensions}
We say that $x(t)=X(1/t)$ is oscillatory near the origin if $X(\tau)$ is oscillatory near $\tau=\ty$.
We measure the rate of oscillatority of $X(\tau)$ near $\tau=\ty$ by the rate of oscillatority of $x(t)$ near $t=0$.
More precisely, the {\it oscillatory dimension} $\dim_{osc}(X)$ (near $\tau=\ty$) is defined as the box dimension
of the graph of $x(t)$ near $t=0$. Also, we investigate the associated Minkowski contents. Analogously for $y(t)$ and $Y(\tau)$.

Given the oscillatory integral $I(\tau)$ from (\ref{integral}), we define the \emph{curve dimension} of $I(\tau)$ as the box dimension of the curve defined in the complex plane by $I(\tau)$, near $\tau=\infty$. As in the oscillatory dimension, we also investigate the associated Minkowski contents.

\begin{figure}[htp]
\begin{center}
\begin{tabular}{cc}
$f_1(x)=x^2+1$ & $f_2(x)=x^3+1$\\
\includegraphics[width=0.45\textwidth]{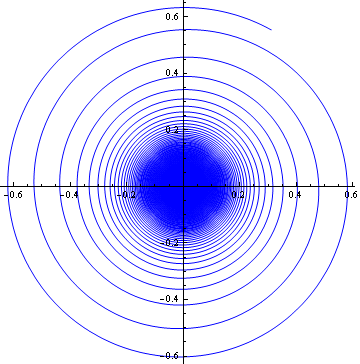}
&
\includegraphics[width=0.45\textwidth]{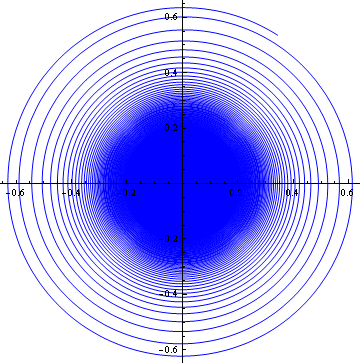}\\
$d_1=\frac{4}{3}$ & $d_2=\frac{3}{2}$

\end{tabular}
\end{center}
\caption{Curves defined by oscillatory integrals $I_i(\tau)$ from (\ref{integral}), for phase functions $f_i$ and their respective curve dimensions $d_i$, see Theorem \ref{tm_case_n1} below.}
\end{figure}

It is well known that degenerate critical points of phase functions contribute to the leading term of the asymptotic expansion (\ref{expansion}) of oscillatory integral (\ref{integral}). On the other hand, curve dimension of (\ref{curve}) will be determined by the asymptotic expansion, so we will connect type of critical point with the curve dimension. More precisely, in Theorems \ref{tm_case_n2} and \ref{tm_case_ng2}, the oscillatory and curve dimensions are related to the oscillation index. Also, it is well known that the asymptotic expansion has been related to the Newton diagram of the phase function.

\subsection{The Newton diagram}
According to \cite{arnold-vol2}, we will use the notion of the Newton polyhedron of the phase function to  formulate our results in the dimension $n\ge 2$. The Newton polyhedron is defined for the Taylor series of the critical point. Let us consider the positive orthant of the space $\eR^n$. We define the Newton polyhedron of an arbitrary subset of this orthant consisting of points with integer coordinates. At all such points we take a parallel positive orthant. The {\em Newton polyhedron} is the convex hull in $\eR^n$ of the union of all parallel orthants mentioned above. The {\em Newton diagram} $\Delta$ of a subset is the union of compact faces of the Newton polyhedron of the same subset.

We consider the power series of the phase $f$
$$f(x)=\sum a_kx^k
$$
with real coefficients, having monomials
$$
x^k=x_1^{k_1}\dots x_n^{k_n}
$$
with multi-index $k=(k_1,\dots k_n)$.
The Newton polyhedron and diagram of this power series has been constructed using the multi-indices which are in the reduced support of the series. Reduced support is obtained by removing the origin from the support of the series. This support is a subset of the positive orthant, consisting of points having non-negative coordinates. These points are given by multi-indices of all monomials from the power series, having non-zero coefficients.
The polynomial $f_\Delta$ that equals to the sum of monomials belonging to the Newton diagram, is called the {\em principal part} of the series. To each face $\gamma$ of the Newton diagram is associated the quasi-homogeneous polynomial. The type of quasi-homogeneity is determined by the slope of the face.
Furthermore, we introduce the concept of nondegeneracy of the principal part. Notice that in this  article we have $3$ distinct types of nondegeneracy:
\begin{itemize}
\item nondegeneracy of a critical point with respect to the Hessian,
\item nondegeneracy of the Minkowski content,
\item nondegeneracy of the principal part of the series.
\end{itemize}

The principal part $f_\Delta$ of the power series $f$ with real coefficients is $\eR$-nondegenerate if for every compact face $\gamma$ of the Newton polyhedron of the series the polynomials
$$
\partial f_{\gamma}/\partial x_1,\dots,\partial f_{\gamma}/\partial x_n
$$
do not have common zeroes in $(\eR\setminus 0)^n$.

Roughly speaking, $\eR$-nondegeneracy means that these mentioned derivatives have the same common zeroes as monomials. This property is essential for the resolution of the singularity, see \cite[p.\ 195]{arnold-vol2}. Furthermore, the set of all series with a degenerate principal part is `small`, more precisely, the set of $\eR$-nondegenerate series is dense in the space of all series with a fixed  Newton polyhedron, see Lemma 6.1. \cite{arnold-vol2}. A generalization of the notion of the principal part for $\eR$-degenerate vector fields could be found in \cite{zup2000}.

The asymptotic expansion of the oscillatory integrals is related to some properties of critical points of its phase function, which could be read from the Newton diagram. Let us consider the bisector of the positive orthant in $\eR^n$, that is the line consisting of points with equal coordinates. The bisector intersects the boundary of the Newton polyhedron in the exactly one point $(c,\dots,c)$, which is called the center of the boundary of the Newton polyhedron. The number $c$ is called the {\em distance to the Newton polyhedron}. {\em Remoteness of the Newton polyhedron} is equal to $r=-1/c$. If $r>-1$ the Newton polyhedron is {\em remote}, which means that it does not contain the point $(1,\dots,1)$.

Let the phase be an analytic function in a neighborhood of its critical point. {\em Remoteness of the critical point} of the phase is the upper bound of remotenesses of the Newton polyhedra of the Taylor series of the phase in all systems of local analytic coordinates with the origin at the critical point. The coordinates in which the remoteness is the greatest, are called the {\em adapted} coordinates to the critical point.

We consider the open face which contains the center of the boundary of the Newton polyhedron. The codimension of this face, less one, is called the {\em multiplicity} of the remoteness. If the face is a vertex then the multiplicity is $n-1$, and if the face is an edge then the multiplicity is $n-2$.

\section{Main results}

We use Theorems 6.1., 6.2., 6.3., 6.4. from \cite{arnold-vol2} in order to measure the oscillatority of the oscillatory integral by using the box dimension. These theorems give the asymptotic expansion of (\ref{integral}) if the phase $f$ has no critical points, nondegenerate and degenerate critical points. Theorem 6.4. involves Newton diagrams. In our theorems we use these results about asymptotic expansions.

In Theorems \ref{tm_case_n1}, \ref{tm_case_n2} and \ref{tm_case_ng2} we present our main results about fractal analysis of singularities in dimensions $n=1$, $n=2$ and $n>2$, respectively. Proofs of these theorems are presented in Section \ref{section_proofs}.

\begin{theorem}[The phase function of a single variable]\label{tm_case_n1}
Let $n=1$, the standard assumptions on $f$ and $\phi$ hold, and let $f(0)\neq 0$. Assume $f'(0)=f''(0)=\cdots=f^{(s-1)}(0)=0$ and $f^{(s)}(0)\neq 0$ for some integer $s\geq 2$. Let $\Gamma$ be the curve defined by (\ref{integral}) and (\ref{curve}), near the origin. Then:

$(i)$ The oscillatory dimension of both $X$ and $Y$ from (\ref{curve}) is equal to $d'=\frac{3s-1}{2s}$ and associated graphs are Minkowski nondegenerate.

$(ii)$ The curve dimension of $I$ is $d=\frac{2s}{s+1}$, curve $\Gamma$ is Minkowski measurable, and $d$-dimensional Minkowski content of $\Gamma$ is
    \begin{equation}\label{Mcontent}
    \M^d(\Gamma)=|C_1|^{\frac{2s}{s+1}}\cdot\pi\cdot\left(\frac{\pi}{s\cdot f(0)}\right)^{-\frac{2}{s+1}}\cdot\frac{s+1}{s-1},
    \end{equation}
where the constant $C_1$ depends on the phase function $f$ and on the amplitude function value in the origin $\phi(0)$.
\end{theorem}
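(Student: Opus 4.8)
The plan is to reduce both parts to the formulas of Tricot for $(\alpha,\beta)$-chirps and for spirals recalled in the introduction (see \cite{tricot}), by first isolating the leading term of the asymptotic expansion of $I(\tau)$ and then treating the remainder as a negligible perturbation. Since $n=1$ and $0$ is a critical point of exact order $s\ge2$, \cite[Sec.~6]{arnold-vol2} --- equivalently, classical one-dimensional stationary phase after the substitution $u=x^{s}$ on each of the half-lines $x>0$ and $x<0$ --- gives a full asymptotic expansion $I(\tau)\sim e^{i\tau f(0)}\sum_{j\ge1}C_{j}\tau^{-j/s}$, with \emph{no logarithmic terms} (as already noted after the statement of Theorem \ref{tm_case_n1}), that may be differentiated term by term. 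The leading coefficient has the form $C_{1}=\phi(0)\,\kappa(s)\,(s!/|f^{(s)}(0)|)^{1/s}$, where $\kappa(s)\ne0$ is an explicit constant (a multiple of $\Gamma(1/s)$, with an extra factor $\cos(\pi/(2s))$ when $s$ is odd); all that is used later is that $C_{1}\ne0$, that it depends on $f$ only through $s$ and $f^{(s)}(0)$, and on $\phi$ only through $\phi(0)$. Writing $C_{1}=|C_{1}|e^{i\theta}$ and $\omega:=f(0)$, we have $I(\tau)=e^{i(\omega\tau+\theta)}\bigl(|C_{1}|\tau^{-1/s}+O(\tau^{-2/s})\bigr)$, with $I'(\tau)=i\omega\,e^{i(\omega\tau+\theta)}|C_{1}|\tau^{-1/s}(1+o(1))$.

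For part $(i)$, substituting $\tau=1/t$ gives, as $t\to0^{+}$, $x(t)=|C_{1}|\,t^{1/s}\cos(\omega/t+\theta)+O(t^{2/s})$ and $y(t)=|C_{1}|\,t^{1/s}\sin(\omega/t+\theta)+O(t^{2/s})$. Up to a nonzero multiplicative constant, a bounded frequency $\omega$ and constant phase $\theta$ inside the trigonometric factor, and an additive $O(t^{2/s})$ error, these are exactly the $(\alpha,\beta)$-chirp from the introduction with $\alpha=1/s$, $\beta=1$, and $0<\alpha\le\beta$ since $s\ge2$. I would then prove, by the standard box-counting estimate that compares the graph with rectangles over the successive half-periods of the oscillation (in the spirit of \cite{tricot}, \cite{zuzu}), a chirp-stability lemma: passing from $x^{\alpha}\sin(x^{-\beta})$ to $c\,x^{\alpha}\sin(\omega x^{-\beta}+\theta)+O(x^{\gamma})$ with $c\ne0$, $\omega\ne0$ and $\gamma>\alpha$ alters neither the box dimension nor the Minkowski (non)degeneracy of the graph near the origin. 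This yields $\dim_{osc}(X)=\dim_{osc}(Y)=2-\frac{\alpha+1}{\beta+1}=\frac{3s-1}{2s}$, with both graphs Minkowski nondegenerate.

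For part $(ii)$, in polar coordinates $\Gamma$ is $r(\tau)=|I(\tau)|=|C_{1}|\tau^{-1/s}(1+o(1))$ with polar angle $\psi(\tau)=\arg I(\tau)=\omega\tau+\theta+o(1)$, which is monotone in $\tau$ for $\tau$ large; inverting $\psi$, the curve agrees, up to an $o(1)$ relative perturbation, with the spiral $r=m\,\psi^{-1/s}$, $m=|C_{1}||\omega|^{1/s}$, so Tricot's spiral formula gives $\dim_{B}\Gamma=\frac{2}{1+1/s}=\frac{2s}{s+1}=:d$. For the exact content I would follow the decomposition of the $\e$-neighbourhood of such spirals used in \cite{clothoid} and \cite{zuzu}: at radius $r$ two consecutive coils are a distance comparable to $|C_{1}|^{-s}r^{s+1}/(s|\omega|)$ apart, which equals $2\e$ at the critical radius $r_{\e}=\bigl(\tfrac{s\,f(0)}{\pi}\bigr)^{1/(s+1)}|C_{1}|^{s/(s+1)}\e^{1/(s+1)}(1+o(1))$, so $\Gamma_{\e}$ splits into a core that is essentially the disc of radius $r_{\e}$, of area $\pi r_{\e}^{2}(1+o(1))$, and a $2\e$-tube along the non-overlapping tail $\{r>r_{\e}\}$, of area $2\e$ times the arc length of that tail. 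Since $s\ge2$, the arc-length integral $\sim|\omega||C_{1}|\int^{\tau_{\e}}\tau^{-1/s}\,d\tau$ is governed by its upper limit $\tau_{\e}=(|C_{1}|/r_{\e})^{s}$, and the tube area evaluates to $\tfrac{2}{s-1}\pi r_{\e}^{2}(1+o(1))$. Hence $|\Gamma_{\e}|=\tfrac{s+1}{s-1}\pi r_{\e}^{2}(1+o(1))$; since $2-d=2/(s+1)$, the limit exists, $\Gamma$ is Minkowski measurable, and substituting $r_{\e}$ yields $\M^{d}(\Gamma)=\pi\cdot\tfrac{s+1}{s-1}\cdot\bigl(\tfrac{\pi}{s\,f(0)}\bigr)^{-2/(s+1)}|C_{1}|^{2s/(s+1)}$, which is (\ref{Mcontent}).

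The step I expect to be the main obstacle is making these two perturbation arguments rigorous: Tricot's formulas are stated for the \emph{exact} chirp and the \emph{exact} spiral, whereas here those shapes occur only as leading terms, so one must show that the $O(\tau^{-2/s})$ remainder --- and its derivative --- changes neither the box dimension nor, for part $(ii)$, the precise asymptotics of $|\Gamma_{\e}|$. Concretely, the errors in the effective critical radius $r_{\e}$, in the arc length of the non-overlapping tail, and in the crude dichotomy between overlapping and non-overlapping coils must each be controlled to be $o(\e^{2/(s+1)})$. A minor additional point is the contribution of the innermost coils and, if $\tau$ is allowed to run to both $+\infty$ and $-\infty$, of the second, mirror-image, spiral branch; both lie inside the core disc and affect only the already-accounted core area.
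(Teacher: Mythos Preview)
Your strategy is correct and matches the paper's: isolate the leading term $e^{i\tau f(0)}C_{1}\tau^{-1/s}$, reduce part $(i)$ to an $(\alpha,\beta)$-chirp with $\alpha=1/s$, $\beta=1$, and part $(ii)$ to a power spiral $r\sim m\varphi^{-1/s}$ with $m=|C_{1}|f(0)^{1/s}$. Your heuristic Minkowski-content computation via the core/tube split is exactly the mechanism behind the theorem the paper cites, and your final formula agrees with~(\ref{Mcontent}).

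The one substantive difference is in how the perturbation step---which you rightly flag as the main obstacle---is handled. The paper does \emph{not} prove an ad hoc stability lemma; instead it invokes two ready-made results, Theorem~\ref{BDchirp} (from \cite{cswavy}) and Theorem~\ref{tm_ffam} (from \cite{clothoid}, equivalently \cite{zuzu}), which are stated not for the exact chirp or exact spiral but for any $y(t)=p(t)S(q(t))$ with $p\simeq_{1}t^{\alpha}$, $q\simeq_{1}t^{-\beta}$, respectively any $r(\varphi)$ with $r'(\varphi)/(\varphi^{-\alpha})'\to m$ and $|r''(\varphi)\varphi^{\alpha}|$ bounded. These \emph{are} the stability lemmas you propose to prove. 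Consequently, the paper's effort goes elsewhere: it writes $I(\tau)=e^{i\tau f(0)}P(\tau)$ and shows carefully that $P\in C^{\infty}$ and that each $P^{(m)}$ has an asymptotic expansion obtained by termwise differentiation (by recognising $I^{(m)}$ as another oscillatory integral with amplitude $f^{m}\phi$), then rewrites $y(t)$ \emph{exactly} as $p(t)\sin(q(t))$ with $q(t)=f(0)t^{-1}+\psi(t)$, $\psi$ bounded, rather than as chirp$+O(t^{2/s})$. This multiplicative packaging, together with the derivative control on $p$, $q$ and on $r$, $r'$, $r''$, is what the black-box theorems require; your additive $O(t^{2/s})$ formulation would need its own argument. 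So your plan is sound, but you can shortcut the hardest part by citing Theorems~\ref{BDchirp} and~\ref{tm_ffam} and concentrating on verifying their derivative hypotheses.
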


\remark
The constant $C_1$ can be explicitly calculated using a standard formula for phase functions with nondegenerate critical point; see Remark \ref{remark_non_deg_coeff} in Section \ref{section_examples} with examples. For a more general case of phase functions $f$ see \cite{stein}.

\begin{theorem}[The phase function of two variables]\label{tm_case_n2}
Let $n=2$, the standard assumptions on $f$ and $\phi$ hold, and let $f(0)\neq 0$.
Let $\beta$ be the remoteness of the critical point of the phase function $f$.
Let $\Gamma$ be the curve defined by (\ref{integral}) and (\ref{curve}), near the origin, with asymptotic expansion (\ref{expansion}). Then:

$(i)$ If the multiplicity of the remoteness $\beta$ is equal to $0$ or the remoteness $\beta$ is equal to $-1$, then the oscillatory dimension of both $X$ and $Y$ from (\ref{curve}) is equal to $d'=(\beta+3)/2$ and the associated graphs are Minkowski nondegenerate. The curve dimension of $I$ is $d=2/(1-\beta)$ and the associated Minkowski content is
\begin{equation}\label{Mcontent2}
    \M^d(\Gamma)=\left[\frac{|a_{0,\beta}(\phi)|}{f(0)^{\beta}}\right]^{\frac{2}{1-\beta}}\cdot[-\beta]^{\frac{2\beta}{1-\beta}} \cdot\pi^{\frac{1+\beta}{1-\beta}}\cdot\frac{1-\beta}{1+\beta}.
\end{equation}

$(ii)$ If the multiplicity of the remoteness $\beta$ is equal to $1$ and the remoteness $\beta$ is bigger than $-1$, then the oscillatory and curve dimensions are the same as in the previous case with associated degenerate Minkowski contents

\end{theorem}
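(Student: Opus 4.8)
The plan is to reduce the statement to two model curves — a \emph{chirp} for the reflected graphs of $X$ and $Y$, and a \emph{spiral} for $\Gamma$ — once the leading term of the asymptotic expansion of $I(\tau)$ has been pinned down. First I would extract that leading term by applying Theorem~6.4 of \cite{arnold-vol2} in adapted coordinates at $0$ (together with Theorems~6.1--6.2 for the borderline value $\beta=-1$, which is exactly the case of a nondegenerate quadratic part of $f$, i.e.\ a Fresnel integral). Under the hypotheses of part $(i)$ the multiplicity of the remoteness is $0$, so the term of order $\tau^{\beta}$ in (\ref{expansion}) carries no logarithm and
\[
I(\tau)=e^{i\tau f(0)}\bigl(a_{0,\beta}(\phi)\,\tau^{\beta}+o(\tau^{\beta})\bigr),\qquad\tau\to\infty,
\]
with $a_{0,\beta}(\phi)\neq0$; under the hypotheses of part $(ii)$ the multiplicity equals $1$ and the leading term becomes $e^{i\tau f(0)}a_{1,\beta}(\phi)\,\tau^{\beta}\log\tau\,(1+o(1))$. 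In both cases $\beta\in[-1,0)$: since $0$ is a critical point, in any local analytic chart the reduced support of $f$ lies in $\{k_1+k_2\ge2\}$, which forces the distance $c$ to the Newton polyhedron to satisfy $c\ge1$, hence $\beta=-1/c\ge-1$. Writing $a_{K,\beta}(\phi)=|a_{K,\beta}(\phi)|e^{i\theta_0}$ and separating real and imaginary parts of $e^{i\tau f(0)}$ gives
\[
X(\tau)=|a_{K,\beta}(\phi)|\,\tau^{\beta}(\log\tau)^{K}\cos(\tau f(0)+\theta_0)\,(1+o(1)),
\]
and the same with $\sin$ for $Y$, where $K=0$ in case $(i)$ and $K=1$ in case $(ii)$.

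For the oscillatory dimension I would reflect: $x(t)=X(1/t)$ and $y(t)=Y(1/t)$ behave near $t=0$ like $|a_{K,\beta}(\phi)|\,t^{-\beta}(\log(1/t))^{K}\cos(f(0)/t+\theta_0)$, i.e.\ like an $(\alpha,1)$-chirp with $\alpha=-\beta\in(0,1]$, up to a power of $\log(1/t)$. Tricot's chirp formula (\cite[p.~121]{tricot}) then gives $\dim_{osc}X=\dim_{osc}Y=2-(\alpha+1)/2=(\beta+3)/2=d'$; the nonzero constant $f(0)$, the phase $\theta_0$ and the $o(1)$ do not affect this. The Minkowski behavior follows from the sharp two-sided estimates for the area of the $\varepsilon$-tube of a chirp: when $K=0$ one gets $|A_\varepsilon|\asymp\varepsilon^{2-d'}$, hence Minkowski nondegeneracy of the graphs (the boundary value $\beta=-1$, i.e.\ $\alpha=1$, being the only case needing separate care); when $K=1$ the logarithmic envelope does not change the exponent but multiplies $|A_\varepsilon|/\varepsilon^{2-d'}$ by an unbounded $|\log\varepsilon|$-power, so the graphs are Minkowski degenerate, as asserted in $(ii)$.

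For the curve $\Gamma$ I would write $(X(\tau),Y(\tau))$ in polar coordinates: the argument is $\psi=\tau f(0)+\theta_0$ and the radius $r(\tau)=|a_{K,\beta}(\phi)|\,\tau^{\beta}(\log\tau)^{K}(1+o(1))$, so $\Gamma$ spirals into the origin. Eliminating $\tau=(\psi-\theta_0)/f(0)$ puts $\Gamma$ in the form $r=g(\psi)$ with $g(\psi)\sim m\psi^{\beta}$ as $\psi\to\infty$ (times $(\log\psi)^{K}$), where $m=|a_{0,\beta}(\phi)|/f(0)^{\beta}$; comparison with Tricot's spiral $r=m\varphi^{-\alpha}$, $\alpha=-\beta\in(0,1]$, gives curve dimension $2/(1+\alpha)=2/(1-\beta)=d$. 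For the content in case $(i)$ I would run the standard spiral computation: split $\Gamma_\varepsilon$ into a ``core'' disk of radius $r_\varepsilon$, inside which consecutive coils lie within $2\varepsilon$ of each other so that $\Gamma_\varepsilon$ fills the disk, and an outer part in which the $j$-th coil contributes a strip of area $\approx 2\varepsilon\cdot 2\pi r_j$; the transition radius is fixed by $2\pi|g'|\approx 2\varepsilon$. Summing over the resolved coils (a sum of type $\sum j^{-\alpha}$, responsible for the factor $\tfrac{1-\beta}{1+\beta}$), letting $\varepsilon\to0$ and carrying the constants $m$, $\alpha$ and the powers of $\pi$ through gives exactly (\ref{Mcontent2}) — finite and positive when $\beta\in(-1,0)$ (so $\Gamma$ is then even Minkowski measurable) and equal to $+\infty$ when $\beta=-1$, consistently with the blow-up of the factor $\tfrac{1-\beta}{1+\beta}$. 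In case $(ii)$ the same decomposition drags a $(\log\psi)^{K}$ along and the resulting limit defining $\mathcal{M}^{d}(\Gamma)$ is $0$ or $\infty$, so $\Gamma$ is Minkowski degenerate.

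The main obstacle, beyond extracting the precise leading term from the resolution of singularities in \cite{arnold-vol2} — in particular checking that multiplicity $0$ genuinely excludes the logarithm and that $\beta=-1$ is the clean Fresnel case — is making the spiral Minkowski-content computation rigorous: justifying the core/annulus split, controlling the influence of the $o(1)$ perturbation of the radius and of the possible $(\log\psi)^{K}$ factor on the transition radius $r_\varepsilon$, and carrying every multiplicative constant through the limit so as to land on the exact formula (\ref{Mcontent2}) rather than merely the correct exponent $2-d$.
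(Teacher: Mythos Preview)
Your overall strategy --- reduce to a chirp for the reflected graphs and to a power (or power-log) spiral for $\Gamma$ --- is exactly what the paper does. But there are two genuine gaps.

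\textbf{First, the identification of the leading term in case $(ii)$.} You write that multiplicity of the remoteness equal to $1$ makes the leading term $a_{1,\beta}(\phi)\tau^{\beta}\log\tau$. Arnold's results only give the \emph{upper} bound: the multiplicity of the oscillation index is at most the multiplicity of the remoteness. So multiplicity $1$ tells you nothing more than $a_{k,\beta}=0$ for $k\ge2$; it does \emph{not} force $a_{1,\beta}\neq0$ for your particular amplitude $\phi$. The paper closes this gap by passing to \emph{superadapted} coordinates and invoking Greenblatt's Theorem~1.2 (Case~2 and Comment~2), which gives an explicit nonvanishing formula for $a_{1,\beta}$ when the bisector hits a vertex. (In the same vein, in case $(i)$ the paper uses Greenblatt's Cases~1 and~3 to get $a_{1,\beta}=0$ \emph{and} $a_{0,\beta}\neq0$ for the given $\phi$; the definition of the oscillation index only guarantees that \emph{some} amplitude makes $a_{0,\beta}\neq0$.) Also, you appeal to Theorem~6.4 of \cite{arnold-vol2}, but that theorem assumes $\mathbb R$-nondegeneracy of the principal part, which is not among the hypotheses for $n=2$; the paper uses the stronger Theorem~6.5 (Varchenko), valid for arbitrary analytic phases in two variables.

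\textbf{Second, the ``$o(1)$'' is not enough to feed into the dimension/content theorems.} Tricot's formulas, as stated, are for exact model functions $x^{\alpha}\sin(x^{-\beta})$ and $r=m\varphi^{-\alpha}$. The tools that handle perturbations (the paper's Theorems~\ref{BDchirp} and~\ref{tm_ffam}, and their logarithmic variants Theorems~\ref{BDchirp_mod} and~\ref{ffa}) require control of $p'$, $q'$, $r'$, $r''$ and not just the leading asymptotics of $p$, $q$, $r$. The paper obtains this by differentiating under the integral sign, observing that $I^{(m)}(\tau)$ is again an oscillatory integral with the same phase and amplitude $f^{m}\phi$, so each derivative admits an expansion in the same asymptotic scale; only then can one verify the hypotheses of the chirp/spiral theorems. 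Your plan to redo the core/annulus computation from scratch would have to reproduce all of this, including the log-variant for case $(ii)$, which is precisely the content of Section~\ref{section_pwr-log-chrips-spirals}.
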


\begin{theorem}[The phase function of more than two variables]\label{tm_case_ng2}
Let $n>2$ the standard assumptions on $f$ and $\phi$ hold, and let $f(0)\neq 0$.
Let the principal part of the Taylor series of $f$ at its critical point is $\eR$-nondegenerate, and the Newton polyhedron of this series is remote with the remoteness of the Newton polyhedron equal to $\beta$.
Let $\Gamma$ be the curve defined by (\ref{integral}) and (\ref{curve}), near the origin, having asymptotic expansion (\ref{expansion}). Then:

$(i)$ If $a_{0,\beta}(\phi)\ne 0$ and $a_{i,\beta}=0$, for $i=1,\dots,n-1$, the oscillatory dimension of both $X$ and $Y$ from (\ref{curve}) is equal to $d'=(\beta+3)/2$ and the associated graphs are Minkowski nondegenerate. The curve dimension of $I$ is $d=2/(1-\beta)$ and the associated Minkowski content is given by (\ref{Mcontent2}).

$(ii)$ If for some $L>0$ holds $a_{L,\beta}\neq 0$, the oscillatory and curve dimensions are the same as for the previous case and the associated Minkowski contents are degenerate.

\end{theorem}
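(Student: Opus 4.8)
The plan is to follow the scheme of the proof of Theorem~\ref{tm_case_n2}, replacing the two--variable asymptotics by the $n$--variable version, and then to identify $\Gamma$ (and the graphs of $x$, $y$) with a model spiral (resp.\ a chirp) up to a negligible perturbation. First I would apply Theorem~6.4 of \cite{arnold-vol2}: since the principal part of the Taylor series of $f$ at $0$ is $\eR$--nondegenerate and the Newton polyhedron is remote with remoteness $\beta$, the resolution of singularities furnished by the Newton diagram applies, the given coordinates are adapted, the oscillation index of $f$ at $0$ equals $\beta$, and the expansion (\ref{expansion}) holds with all exponents $\alpha\le\beta$, where $-1<\beta<0$. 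Writing $\mu=-\beta\in(0,1)$, in case $(i)$ the hypotheses $a_{0,\beta}(\phi)\ne 0$ and $a_{i,\beta}=0$ for $i=1,\dots,n-1$ give
\[
I(\tau)=e^{i\tau f(0)}\bigl(a_{0,\beta}(\phi)\,\tau^{\beta}+o(\tau^{\beta})\bigr),\qquad \tau\to\infty ,
\]
while in case $(ii)$, letting $L\ge 1$ be the largest index with $a_{L,\beta}(\phi)\ne 0$,
\[
I(\tau)=e^{i\tau f(0)}\bigl(a_{L,\beta}(\phi)\,\tau^{\beta}(\log\tau)^{L}+o(\tau^{\beta}(\log\tau)^{L})\bigr),\qquad \tau\to\infty .
\]

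For case $(i)$: since $I(\tau)\to 0$, the curve $\Gamma$ spirals to the origin of $\eR^2$, and in polar coordinates it is described by a radius $\rho(\tau)=|I(\tau)|\sim|a_{0,\beta}(\phi)|\,\tau^{\beta}$ and an argument $\theta(\tau)=f(0)\,\tau+\arg a_{0,\beta}(\phi)+o(1)$, which is strictly monotone because $f(0)\ne 0$. Eliminating $\tau$ gives $\rho\sim m\,|\theta|^{-\mu}$ with $m$ a constant depending on $|a_{0,\beta}(\phi)|$, $f(0)$ and $\beta$, so that $\Gamma$ is asymptotic to the spiral $r=m\,\varphi^{-\mu}$. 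By the spiral estimates already used in the proof of Theorem~\ref{tm_case_n2} (Tricot's formula from \cite{tricot} together with the accompanying Minkowski content computation), the curve dimension of $I$ equals $2/(1+\mu)=2/(1-\beta)$, $\Gamma$ is Minkowski measurable, and substituting $m$ and $\mu$ into the spiral content formula yields exactly (\ref{Mcontent2}). For the oscillatory dimension, from $X(\tau)\sim|a_{0,\beta}(\phi)|\,\tau^{\beta}\cos\bigl(f(0)\tau+\arg a_{0,\beta}(\phi)\bigr)$ and the analogous formula for $Y$, one sees that near $t=0$ the function $x(t)=X(1/t)\sim|a_{0,\beta}(\phi)|\,t^{\mu}\cos\bigl(f(0)/t+\arg a_{0,\beta}(\phi)\bigr)$ is a chirp of Tricot type with exponents $(\mu,1)$, $0<\mu<1$; hence its graph has box dimension $2-(\mu+1)/2=(\beta+3)/2$ and is Minkowski nondegenerate, and likewise for $y$.

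For case $(ii)$: the only structural change is the slowly varying factor $(\log\tau)^{L}$ in $\rho(\tau)$ and in the amplitude of the chirp. Such a factor does not affect the box dimension, so the curve and oscillatory dimensions are the same as in case $(i)$; but since $(\log\tau)^{L}\to\infty$, the resulting spiral (resp.\ chirp) is strictly wider than the $L=0$ model, forcing the upper $d$--dimensional Minkowski content to be $\ty$, so the associated contents are degenerate. This is the point at which $n>2$ genuinely differs from $n=2$: in (\ref{expansion}) the index $k$ now ranges up to $n-1$, so powers $(\log\tau)^{L}$ with $L>1$ may occur; this changes the exponent $L$ but not the qualitative conclusion.

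The main obstacle is the stability step: one must verify that replacing $I(\tau)$ by its leading term changes neither the box dimension nor the Minkowski content of $\Gamma$ and of the graphs of $x$, $y$. This is the perturbation lemma for spirals and chirps, which requires controlling the tubular $\e$--neighbourhood under an $o(\tau^{\beta})$ (resp.\ $o(\tau^{\beta}(\log\tau)^{L})$) perturbation of the parametrization --- precisely the estimate carried out for Theorem~\ref{tm_case_n2}, which applies here verbatim because the leading term has the identical shape $C\,\tau^{\beta}(\log\tau)^{L}e^{i\tau f(0)}$. The $\eR$--nondegeneracy and remoteness hypotheses are used only through \cite{arnold-vol2}, to guarantee that $\beta$ is the oscillation index and that the leading exponent has the stated form; everything downstream is the one--parameter spiral/chirp analysis.
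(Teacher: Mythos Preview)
Your proposal is correct and follows essentially the same route as the paper: invoke \cite[Theorem~6.4]{arnold-vol2} to identify the oscillation index with the remoteness $\beta$, write down the expansion with leading term $a_{L,\beta}\tau^{\beta}(\log\tau)^{L}e^{i\tau f(0)}$ (with $L=0$ in case~$(i)$), and then defer everything to the machinery already set up for Theorems~\ref{tm_case_n1} and~\ref{tm_case_n2}. The paper's own proof is in fact even terser than yours --- it simply says the argument is analogous to that of Theorem~\ref{tm_case_n2}, with the asymptotic scale now containing $\tau^{\alpha}(\log\tau)^{k}$ for $k\le n-1$.

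One clarification worth making: what you call the ``perturbation lemma for spirals and chirps'' is not a separate stability estimate in the paper but is packaged directly into Theorems~\ref{tm_ffam}, \ref{BDchirp} (for case~$(i)$) and Theorems~\ref{ffa}, \ref{BDchirp_mod} (for case~$(ii)$) of Section~\ref{section_pwr-log-chrips-spirals}. These theorems take as input not a model spiral plus a perturbation, but the actual radius $r(\varphi)$ (resp.\ amplitude $p(t)$) together with two-sided asymptotics on $r,r',r''$ (resp.\ the $\simeq_1$ conditions on $p,q$). The required derivative control is obtained, exactly as in the proof of Theorem~\ref{tm_case_n1}, by differentiating under the integral sign: each $I^{(m)}(\tau)$ is again an oscillatory integral with the same phase and a modified amplitude, hence has an expansion in the same scale, and this propagates to $P^{(m)}$, $G^{(m)}$, $r^{(m)}$ and $p^{(m)}$. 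So your ``main obstacle'' is handled not by comparing $\Gamma$ to a model curve, but by verifying hypotheses of those Section~\ref{section_pwr-log-chrips-spirals} theorems directly; citing Tricot's formula alone would not suffice, since that covers only the exact power-law spiral $r=m\varphi^{-\alpha}$.
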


\begin{remark}
In Theorems \ref{tm_case_n1}, \ref{tm_case_n2} and \ref{tm_case_ng2}, if we take $f(0)=0$, then the curve $\Gamma$ and the associated reflected graphs are rectifiable, and all dimensions are equal to $1$.
\end{remark}

If there are no singularities in the observed domain, which is given by the support of the amplitude $\phi$, then Proposition \ref{pr_no_sing} gives only a trivial fractal dimension.

\begin{prop} {\rm (The regular phase function)}\label{pr_no_sing}
Assume that the standard assumptions on $\phi$ hold, and that $f$ does not have any critical point contained in the interior of the support of $\phi$. Let $\Gamma$ be the curve defined by (\ref{integral}) and (\ref{curve}), near the origin. Then $\Gamma$ is a rectifiable curve and the curve dimension of $I$ is equal to $1$. Furthermore, the graphs of the functions $x(t)=X(1/t)$ and $y(t)=Y(1/t)$, where $X$ and $Y$ are from (\ref{curve}), are rectifiable.
Hence, the oscillatory dimension of $X$ and $Y$ equals $1$.
\end{prop}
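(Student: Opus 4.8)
The plan is to reduce everything to the \emph{non-stationary phase} principle. Since $f$ has no critical point in $\supp\phi$, the gradient $\nabla f$ does not vanish there, and one may introduce the first-order differential operator $L=\frac{1}{i\tau\,|\nabla f|^{2}}\sum_{j=1}^{n}(\pa_{j}f)\,\pa_{j}$, which is smooth on a neighbourhood of $\supp\phi$ and satisfies $L(e^{i\tau f})=e^{i\tau f}$. Integrating by parts $N$ times in (\ref{integral}) moves $L$ onto the amplitude as its (smooth, compactly supported) transpose $L^{t}$, each application producing a factor $\tau^{-1}$, which yields $|I(\tau)|\le C_{N}\tau^{-N}$ for every $N$; this is precisely \cite[Theorem 6.1]{arnold-vol2}. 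Differentiating (\ref{integral}) under the integral sign gives $I^{(k)}(\tau)=\int_{\eR^{n}}(if(x))^{k}e^{i\tau f(x)}\phi(x)\,dx$, and since $f$ is bounded on the compact set $\supp\phi$ the same integration by parts applies with $(if)^{k}\phi$ in place of $\phi$; hence $I^{(k)}(\tau)=O(\tau^{-N})$ for all $k$ and all $N$. In particular $X,Y$ and all their derivatives are rapidly decreasing as $\tau\to\infty$.

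From this the rectifiability of $\Gamma$ is immediate. Choosing $\tau_{0}$ so large that $(X,Y)$ is smooth on $[\tau_{0},\infty)$, the length of $\Gamma$ equals $\int_{\tau_{0}}^{\infty}\sqrt{X'(\tau)^{2}+Y'(\tau)^{2}}\,d\tau=\int_{\tau_{0}}^{\infty}|I'(\tau)|\,d\tau$, and the integrand is $O(\tau^{-2})$, so the integral converges. Thus $\Gamma$ is a rectifiable curve, and, as recalled in the introduction, every rectifiable curve has box dimension $1$; quantitatively $|\Gamma_{\e}|\le 2\e\cdot\mathrm{length}(\Gamma)+O(\e^{2})$, so $\M^{*1}(\Gamma)<\infty$ and the curve dimension of $I$ equals $1$.

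For the reflected graphs I would change variables to $t=1/\tau$. Then $x(t)=X(1/t)$ satisfies $x'(t)=-t^{-2}X'(1/t)$, and the rapid decay of $X'$ gives $|x'(t)|\le C_{N}\,t^{\,N-2}$ for every $N$, so $x'$ is bounded (indeed $x'(t)\to 0$) on a punctured neighbourhood of $0$. Hence $x$ is Lipschitz on a bounded interval $(0,t_{0}]$, its graph is a rectifiable curve, and therefore has box dimension $1$; the same argument applies to $y(t)=Y(1/t)$. Consequently the oscillatory dimensions of $X$ and $Y$ are both equal to $1$, which completes the proof.

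I do not anticipate a serious obstacle: this is the textbook non-stationary phase estimate. The only points deserving care are the standard justification of differentiation under the integral sign (immediate from smoothness and compact support), a possible mild localization if a critical point of $f$ lies on the boundary of $\supp\phi$ — there $\phi$ and all its derivatives vanish, so one may shrink the amplitude or split it with a smooth partition of unity so that $\nabla f\ne 0$ on the relevant support — and the observation that passing to the reciprocal variable does not harm the length bound, since the extra factor $t^{-2}$ is absorbed by the super-polynomial decay of $X'$ and $Y'$.
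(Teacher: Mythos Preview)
Your proof is correct and follows essentially the same route as the paper: invoke the non-stationary phase estimate (the paper cites \cite[Theorem~6.1]{arnold-vol2} directly, whereas you spell out the operator $L$ and the integration by parts), differentiate under the integral sign to obtain rapid decay of all $I^{(k)}(\tau)$, and then deduce rectifiability of $\Gamma$ and of the reflected graphs via $x'(t)=-t^{-2}X'(1/t)$. Your version is in fact a little more detailed (explicit length integral, the Lipschitz remark, and the boundary observation), but there is no genuine difference in strategy.
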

\begin{proof}
From \cite[Theorem 6.1]{arnold-vol2} it follows that $I(\tau)$ tends to zero more rapidly than any power of the parameter, as $\tau\to+\infty$.
The claim is based on the Riemann-Lebesgue lemma, see \cite[p.\ 16]{wong}.
For a $1$-dimensional situation we have
\[
I'(\tau)=i\int_{\eR}e^{i\tau f(x)}f(x)\phi(x) dx.
\]
The integral  $I'(\tau)$ admits the same type of asymptotic expansion as $I(\tau)$ and
all derivatives go to zero more rapidly than any power, that is,  $\tau^n I^{(k)}(\tau)\to 0$ as $\tau\to+\infty$, for all $k\ge 0$, $n\in\eN$.

We deduce that $\tau^{n}\sqrt{X'\left(\tau\right)^{2}+Y'\left(\tau\right)^{2}}\rightarrow0$
as $\tau\rightarrow+\infty$, for all $n\in\eN$,
so that $\Gamma$ is rectifiable. Therefore
(see \cite{tricot}) its box dimension equals $1$. For the same reason
$x'\left(t\right)=-t^{-2}X'\left(1/t\right)\rightarrow0$ as $t\rightarrow0$,
hence $\int_{0}^{x_{0}}\sqrt{1+x'\left(t\right)^{2}}dt<\infty$. The
same holds for $y$. It proves that graphs of functions $x$ and $y$
are rectifiable, so the oscillatory dimension of $X$ and $Y$ equals
$1$.
\end{proof}

Proposition \ref{pr_nondeg_high} demonstrates that in dimensions higher than $2$, nondegenerate singularities cannot be detected by the fractal dimension.

\begin{prop} {\rm (The nondegenerate critical point in a higher dimension)}\label{pr_nondeg_high}
Assume that the standard assumptions on $\phi$ and $f$ hold, and that $0\in\eR^n$, where $n>2$, is a nondegenerate critical point of $f$ (the Hessian matrix of $f$ is not equal to zero). Let $\Gamma$ be the curve defined by (\ref{integral}) and (\ref{curve}), near the origin. Then $\Gamma$ is a rectifiable curve and the curve dimension of $I$ is equal to $1$. Furthermore, the graphs of the functions $x(t)=X(1/t)$ and $y(t)=Y(1/t)$, where $X$ and $Y$ are from (\ref{curve}), are rectifiable.
Hence, the oscillatory dimension of $X$ and $Y$ equals $1$.
\end{prop}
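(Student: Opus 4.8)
The plan is to follow the scheme already used for Proposition~\ref{pr_no_sing}, the only difference being that at a nondegenerate critical point in dimension $n$ the integral $I(\tau)$ no longer tends to $0$ faster than every power of $\tau$ but only at the fixed rate $\tau^{-n/2}$; the whole argument then hinges on the inequality $n/2>1$, which is exactly the hypothesis $n>2$. (Equivalently, the oscillation index here is $\beta=-n/2\le-3/2$, so the Newton polyhedron is not remote and we are in the ``trivial dimension'' regime complementary to Theorems~\ref{tm_case_n2} and \ref{tm_case_ng2}.)

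First I would extract the decay estimate from the stationary phase expansion. Since $0$ is a nondegenerate critical point, \cite[Theorem 6.2]{arnold-vol2} gives $I(\tau)\sim e^{i\tau f(0)}\sum_{k\ge 0}c_k\,\tau^{-n/2-k}$ as $\tau\to\infty$, hence $|I(\tau)|=O(\tau^{-n/2})$. Differentiating under the integral sign,
\[
I'(\tau)=i\int_{\eR^n}e^{i\tau f(x)}\,f(x)\phi(x)\,dx,
\]
which is again an oscillatory integral with the same phase $f$ and an amplitude $f\phi$ that is still $C^\infty$ with compact support (only the positivity of the amplitude is lost, and that property plays no role in the asymptotic expansion). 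Applying \cite[Theorem 6.2]{arnold-vol2} once more yields $I'(\tau)\sim e^{i\tau f(0)}\sum_{k\ge 0}c_k'\,\tau^{-n/2-k}$, so that
\[
|X'(\tau)|,\ |Y'(\tau)|\ \le\ |I'(\tau)|\ \le\ C\,\tau^{-n/2}
\]
for all sufficiently large $\tau$ and some constant $C>0$. No matching lower bound is needed, so it is irrelevant whether the leading coefficient happens to vanish.

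The two rectifiability statements are then short convergence checks. For $\Gamma$, its arclength over $[\tau_0,\infty)$ is at most $\sqrt 2\,C\int_{\tau_0}^{\infty}\tau^{-n/2}\,d\tau<\infty$ because $n/2>1$; hence $\Gamma$ is rectifiable and, by \cite{tricot}, its box dimension --- that is, the curve dimension of $I$ --- equals $1$. For the reflected graphs, $x'(t)=-t^{-2}X'(1/t)$ gives $|x'(t)|\le C\,t^{(n-4)/2}$ near $t=0$, and since $(n-4)/2>-1$ is precisely the assumption $n>2$, the length $\int_0^{t_0}\sqrt{1+x'(t)^2}\,dt\le t_0+C\int_0^{t_0}t^{(n-4)/2}\,dt$ is finite; the same bound holds for $y$. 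Thus the reflected graphs of $x$ and $y$ are rectifiable, so the oscillatory dimension of $X$ and of $Y$ equals $1$.

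The only delicate point is the decay bound $|I'(\tau)|=O(\tau^{-n/2})$: one has to be sure that differentiation in $\tau$ does not worsen the order of decay. This is the same phenomenon that was already used in the proof of Proposition~\ref{pr_no_sing} (with ``faster than any power'' there in place of $O(\tau^{-n/2})$ here), and I would settle it in the same way --- either by viewing $I'$ as an oscillatory integral of the type governed by \cite[Theorem 6.2]{arnold-vol2}, or by differentiating the asymptotic series term by term as justified in \cite[p.~16]{wong}. Once this is granted, the remainder is just the elementary convergence of $\int^{\infty}\tau^{-n/2}\,d\tau$ and $\int_{0}t^{(n-4)/2}\,dt$ for $n>2$.
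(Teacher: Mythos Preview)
Your argument is correct and follows essentially the same route as the paper: invoke \cite[Theorem 6.2]{arnold-vol2} for the $\tau^{-n/2}$ decay, observe that $I'(\tau)$ is an oscillatory integral of the same type so enjoys the same bound, and then check the two elementary convergence conditions using $n/2>1$. Your treatment is in fact slightly more explicit than the paper's in noting that the new amplitude $f\phi$ loses positivity but that this is irrelevant for the expansion.
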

\begin{proof}
The nondegeneracy of the critical point implies that $I\left(\tau\right)\sim C\cdot e^{i\tau f\left(0\right)}\cdot\tau^{-\frac{n}{2}}$
as $\tau\rightarrow+\infty$, where $C\in\Ce$ (see \cite[Theorem 6.2]{arnold-vol2}.
As in the proof of Proposition \ref{pr_no_sing}, $I'\left(\tau\right)$ admits
the same type of asymptotic expansion. Hence $\sqrt{X'\left(\tau\right)^{2}+Y'\left(\tau\right)^{2}}\leq C_1\tau^{-\frac{n}{2}}$
for some $C_1>0$, so $\Gamma$ is rectifiable.

As above, $x'\left(t\right)^{2}=t^{-4}X'\left(\frac{1}{t}\right)^{2}\leq C_2 t^{n-4}$ for some $C_2>0$.
So $\sqrt{1+x'\left(t\right)^{2}}\leq 1 + C_{3}t^{\frac{n}{2}-2}$ for some $C_{3}>0$. As $\frac{n}{2}>1$, we conclude that the graph of
$x$ is rectifiable. The same holds for $y$. About the dimensions, we conclude as in the
proof of Proposition \ref{pr_no_sing}.
\end{proof}

\section{Examples}\label{section_examples}

\begin{remark}\label{remark_non_deg_coeff}
In \cite[Theorem 6.2]{arnold-vol2} there is an explicit formula for the leading coefficient in the asymptotic expansion of the oscillatory integral with a nondegenerate critical point  of the phase in space of the dimension $n$. If the phase $f$ and the amplitude $\phi$ satisfy the standard assumptions, then a leading coefficient is the coefficient of the power ${\tau}^{-n/2}$ and is equal to
$$
\phi(0)(2\pi)^{n/2}\exp \left((i\pi/4)\cdot\sgn(f_{xx}^{\prime\prime} (0))\right)|\det f_{xx}^{\prime\prime} (0)|^{-1/2}.
$$
\end{remark}

\begin{example} A computation of the Minkowski content of the curve for the nondegenerate case in $1$-dimensional space.

Using Theorem \ref{tm_case_n1}, for $s=2$ we obtain oscillatory and curve dimensions for the integral and the curve defined by (\ref{integral}) and (\ref{curve}), respectively. The oscillatory dimension is equal to $5/2$ and the curve dimension is equal to $4/3$.  Using Remark \ref{remark_non_deg_coeff}, for $n=1$ we compute
$$ 
C_1=\phi(0)\sqrt{2\pi}\,{| f^{\prime\prime} (0)|}^{-1/2}\exp \left((i\pi/4)\cdot\sgn(f^{\prime\prime} (0)) \right) ,
$$
and using formula (\ref{Mcontent}) we obtain the Minkowski content of the curve $\Gamma$
$$
\M^{4/3}(\Gamma)=3|C_1|^{\frac{4}{3}}\pi\left(\frac{\pi}{2 f(0)}\right)^{-\frac{2}{3}}.
$$
For an example, if $f(x)=x^2+1$, then we have
$$
\M^{4/3}(\Gamma)=3\cdot 2^{2/3}\pi {\phi(0)}^{4/3}.
$$

\end{example}

\begin{example} A caustic consisting of the elementary critical points $A_k$ and $D_k$. 

\cite{arnold-vol1} and \cite{arnold-vol2} introduced the classification of singularities using normal forms of singularities and parametric families.  According to the assumptions of our theorems here we work with maps whose critical point does not coincide with the zero point, so we shift the graph of our map. The situation when these points coincide is not oscillatory, see expansion (\ref{expansion}) for $f(0)=0$, so we take $f(0)=1$.   Let us suppose that for a given value of the parameters, the phase function has a unique critical point. 
In this case the caustic in a neighborhood of the given value of the parameter is said to be elementary. Here we mention examples of elementary caustics obtained   by varying two or three parameters, \cite[p.\ 174, 185]{arnold-vol2}, \cite[p.\ 246]{arnold-vol1}. The caustics consist of degenerate critical points of type $A_k$ for $k\ge 1$, and $D_k$ for $k\ge 4$. Contributions of the critical points of the phase to the asymptotic expansion of oscillatory integrals depend on the type of these critical points. Each degenerate critical point has contribution of order $\tau^{\gamma-n/2}$, where $\gamma=(k-1)/(2k+2)$ for $A_k$, and $\gamma=(k-2)/(2k-2)$ for $D_k$, which are singular indices. According to Theorem \ref{tm_case_n2}, the box dimension of the associated curve, the curve dimension, is equal to $d=2/(1-\beta)$, where $\beta=\gamma-n/2$. If $k\to\infty$ then $\gamma\to 1/2$, so $\beta\to(1-n)/2$, hence the curve dimension $d\to 4/(1+n)$. We see that the curve dimension increases and tends to $2$ for $n=1$, and tends to $4/3$ for $n=2$, when we have more complicated critical points whose singular index tends to $1/2$. The oscillatory dimension is equal to $d'=\frac{3+\beta}{2}$.
\end{example}

\begin{example}\label{example_greenblatt} The normal forms of the type $x^p+y^q$.

Consider the phase $f(x,y)=x^p+y^q+1$, for integers $p,q\geq 2$ and $(p,q)\neq (2,2)$, so that $f(0,0)\neq 0$. In this case the remoteness $\beta=-\frac1p-\frac1q$, hence it follows from Theorem \ref{tm_case_n2} that the oscillatory dimension is equal to
$$
d^{\prime} =\frac{2}{1+\frac1p +\frac1q} ,
$$
while the curve dimension is equal to
$$
d=\frac32-\frac{1}{2p}-\frac{1}{2q} .
$$
The computation of Minkowski content (\ref{Mcontent2}) is more involved, as it depend on the computation of the first coefficient in asymptotic expansion (\ref{expansion}) of the integral. Notice that in this example we replace our standard notation $\Gamma$ for the curve associated to the oscillatory integral with $\mathcal{C}$, in order to avoid a confusion with the gamma function. According to \cite{greenblatt} we can compute the first coefficient in the expansion. The phase is written in adapted coordinates, which means that the remoteness is the biggest possible, in the set of all remotenesses of Newton diagrams of the map in different coordinate systems. 
In this case the Newton diagram has only one compact side $S_0$. As the bisector intersects the interior of the compact edge, the leading term of the asymptotic expansion is $d_0(\phi)\tau ^{-\beta}$, where $\beta$ is the remoteness and $\phi$ the amplitude.
First, define the function $S_0(x,y)$ to be equal to $f(x,y)$. Now, define the function $S_0^+(x,y)^{-\frac{1}{d}}$ to be equal to $S_0(x,y)^{-\frac{1}{d}}$ when $S_0(x,y)>0$ and zero otherwise. Analogously, define the function $S_0^-(x,y)^{-\frac{1}{d}}$ to be equal to $(-S_0(x,y))^{-\frac{1}{d}}$ when $S_0(x,y)<0$ and zero otherwise. 
The coordinates are \emph{superadapted}; see \cite{greenblatt}, which means that $S_0(1,y)$ and $S_0(-1,y)$ have no real roots of order bigger than $-1/\beta$, except $y=0$. Hence, according to \cite[Theorem 1.2]{greenblatt}, if we put
$$
c_0(\phi):=\frac{\phi (0,0)}{m+1}\int_{-\infty}^{+\infty}\left(S_0^+(1,y)^{\beta}+S_0^+(-1,y)^{\beta}\right)dy ,
$$
$$
C_0(\phi):=\frac{\phi (0,0)}{m+1}\int_{-\infty}^{+\infty}\left(S_0^-(1,y)^{\beta}+S_0^-(-1,y)^{\beta}\right)dy ,
$$
where $-1/m$ is a slope of the edge $S_0$, then the leading term coefficient of the asymptotic expansion of $I(\tau)$ is equal to
\begin{equation}\label{greenblatt_coefficient}
a_{0,\beta}(\phi) = -\beta\,\Gamma\left(-\beta\right)\left(e^{-i\frac{\pi}{2}\beta}c_0(\phi)+e^{i\frac{\pi}{2}\beta}C_0(\phi)\right) .
\end{equation}

Obviously, there are four distinct cases in the computation regarding the integers $p$ and $q$ being odd or even. We will compute the leading term coefficient and the Minkowski content (\ref{Mcontent2}) for the case of $p$ and $q$ being even. The other three cases are computed in similar fashion.
We compute $\beta=-1/p-1/q$ and $m=p/q$. After integration we obtain the result
\[
  c_0(\phi)=\frac{4\,\phi(0,0)}{q(m+1)}\,B\left(\frac{1}{p}, \frac{1}{q}\right),\qquad C_0(\phi)=0 ,
\]
\[
a_{0,\beta}(\phi) = 4\,\phi(0,0)\,e^{i\frac{\pi}{2}\left(\frac{1}{p}+\frac{1}{q}\right)}\Gamma\left(\frac{1}{p}+1\right)\Gamma\left(\frac{1}{q}+1\right) ,
\]
expressed using the beta function $B$ and the gamma function $\Gamma$.
As we took $f(0,0)=1$ and as we can without loss of generality fix $\phi(0,0)=1$, we obtain the Minkowski content of the curve $\mathcal{C}$ to be equal to
\[
\M^{\frac{3+\beta}{2}}(\mathcal{C}) = \left[4\,\Gamma\left(\frac{1}{p}+1\right)\Gamma\left(\frac{1}{q}+1\right)\right]^{\frac{2}{1-\beta}}\cdot[-\beta]^{\frac{2\beta}{1-\beta}} \cdot\pi^{\frac{1+\beta}{1-\beta}}\cdot\frac{1-\beta}{1+\beta} ,
\]
by putting the coefficient $a_{0,\beta}(\phi)$ in formula (\ref{Mcontent2}), where $\beta$ depends only on $p$ and $q$. Notice that the Minkowski content depends essentially only on $p$ and $q$.

Finally, notice that the normal forms from this example include singularities of the standard classification (see \cite{arnold-vol1}) types $E_6$ and $E_8$, for $(p,q)=(3,4)$ and $(p,q)=(3,5)$, respectively. Also, the normal form for the ordinary cusp is obtained by taking $(p,q)=(2,3)$.
\end{example}

\section{Proofs of main results}
\label{section_proofs}

\begin{proof}[Proof of Theorem \ref{tm_case_n1}]
Without the loss of generality we assume that $f(0)>0$. In the case of $f(0)<0$, we consider the integral $J$ having the phase $\widetilde{f}(x)=-f(x)$. Now $J(\tau)=\overline{I(\tau)}$ and we see from the definitions of fractal properties (oscillatory and curve dimensions and Minkowski contents) of an oscillatory integral, that they are invariant to complex conjugation of $I$.

We use the asymptotic expansion of the integral $I$ from (\ref{integral}),
$$
I(\tau) \sim e^{i\tau f(0)}\sum\limits_{j=1}^{\infty} C_j\cdot\tau^{-j/s},\quad\mathrm{as}\ \tau\to\infty,
$$
where $C_j\in\Ce$, from \cite[Proposition 3 on page 334]{stein}, and it holds that $C_1\neq 0$. From the same reference, it follows that each constant $C_j$ depends on only finitely many derivatives of $f$ and $\phi$ at $0$.
We write
\begin{equation}\label{integral-eP}
I(\tau) = e^{i\tau f(0)}P(\tau),
\end{equation}
where the function $P(\tau)\sim\sum\limits_{j=1}^{\infty} C_j\cdot\tau^{-j/s}$, as $\tau\to\infty$.

First we show that the function $I$ is of class $C^{\infty}(\eR)$, using derivation under the integral sign. By taking the derivative of (\ref{integral}), we get
$$
I'(\tau)=i\int_{\eR^n}e^{i\tau f(x)}\phi_1(x) dx,
$$
where $\phi_1(x)=f(x)\phi(x)$. Inductively, we see that
$$
I^{(m)}(\tau)=i^m\int_{\eR^n}e^{i\tau f(x)}\phi_m(x) dx,\quad\mathrm{for}\ \mathrm{all}\ m\in\eN,
$$
where $\phi_m(x)=[f(x)]^m\phi(x)$. Notice that for every $m\in\eN$, the function $I^{(m)}$ is equal to the constant $i^m$ multiplying the oscillatory integral of type (\ref{integral}), with the phase $f$ and the amplitude $\phi_m$. Further, using the asymptotic expansion of this integral, we get
\begin{equation}\label{D-integral-eP}
I^{(m)}(\tau)=i^m e^{i\tau f(0)}P_m(\tau),\quad\mathrm{for}\ \mathrm{all}\ m\in\eN,
\end{equation}
where the function $P_m$ possesses an asymptotic expansion in the same asymptotic sequence as $P$.

Now we want to prove that the function $P$ is of class $C^{\infty}(\eR)$, and that its derivative of any order possesses an asymptotic expansion in the same asymptotic sequence as $P$, that is, $P^{(m)}(\tau)\sim\sum\limits_{j=1}^{\infty} C_j^{(m)}\cdot\tau^{-j/s}$, as $\tau\to\infty$, where $C_j^{(m)}\in\Ce$. Notice that from (\ref{integral-eP}) and the fact that $I\in C^{\infty}$ immediately follows that $P\in C^{\infty}$.

By taking the derivative of (\ref{integral-eP}), we get
$$
I'(\tau)=i f(0) e^{i\tau f(0)}P(\tau)+e^{i\tau f(0)}P'(\tau).
$$
Respecting (\ref{D-integral-eP}) and dividing every term by $e^{i\tau f(0)}$, we get the expression
$$
P'(\tau)=i\left(P_1(\tau)-f(0)P(\tau)\right),
$$
using \cite[p.\ 14]{erdelyi}, it shows that $P'$ possesses an asymptotic expansion in the same asymptotic sequence as $P$. It follows by induction, that for all $m\in\eN$, the function $P^{(m)}$ also possesses an asymptotic expansion in the same asymptotic sequence as $P$.

Notice that the exponents of the monomials of the asymptotic sequence are integer multiples of  a common real number $-1/s$. Hence, it follows from the clasical proof; see \cite[p.\ 21]{erdelyi}, that the asymptotic expansion of $P^{(m)}$ is given by $m$ times differentiating the asymptotic expansion of $P$, term by term.

Now define $a_j=Re\ C_j$ and $b_j=Im\ C_j$ for all $j\in\eN$. Also, define functions $A(\tau)=Re\ P(\tau)$ and $B(\tau)=Im\ P(\tau)$. Respecting (\ref{curve}) we get
\begin{eqnarray}
X(\tau) &=& \cos(\tau f(0))A(\tau)-\sin(\tau f(0))B(\tau),\\
Y(\tau) &=& \sin(\tau f(0))A(\tau)+\cos(\tau f(0))B(\tau),
\end{eqnarray}
where
\begin{eqnarray}
A(\tau) &\sim& \sum\limits_{j=1}^{\infty} a_j\cdot\tau^{-j/s},\quad\mathrm{as}\ \tau\to\infty,\\
B(\tau) &\sim& \sum\limits_{j=1}^{\infty} b_j\cdot\tau^{-j/s},\quad\mathrm{as}\ \tau\to\infty.
\end{eqnarray}
Notice that from $P(\tau)=A(\tau)+i B(\tau)$ follows that functions $A$ and $B$ are of class $C^{\infty}(\eR)$ and that $A^{(m)}$ and $B^{(m)}$, $m\in\eN_0$, possess asymptotic expansions given by $m$ times differentiating the asymptotic expansions of $A$ and $B$, term by term, respectively.

For the oscillatory dimension we will provide the proof for $Y$. For the function $X$ the proof is analogous. Using the substitution $t=1/\tau$, to determine the oscillatory dimension of $Y$, we further investigate the asymptotic expansion of the function $y$, defined by $y(t)=Y(1/t)$, near the origin,
$$
y(t)=\sin(f(0)/t)a(t)+ \cos(f(0)/t)b(t),
$$
where $a(t)=A\left(t^{-1}\right)$ and $b(t)=B\left(t^{-1}\right)$. Now
\begin{eqnarray}
a(t) &\sim& \sum\limits_{j=1}^{\infty} a_j\cdot t^{j/s},\quad\mathrm{as}\ t\to 0^+,\\
b(t) &\sim& \sum\limits_{j=1}^{\infty} b_j\cdot t^{j/s},\quad\mathrm{as}\ t\to 0^+.
\end{eqnarray}
Notice that both functions $a$ and $b$ are of class $C^{\infty}(\eR^+)$ and that both $a^{(m)}$ and $b^{(m)}$, for all $m\in\eN_0$, possess asymptotic expansions, near the origin, given by $\displaystyle\frac{d^m}{d t^m}\left[A\left(t^{-1}\right)\right]$ and $\displaystyle\frac{d^m}{d t^m}\left[B\left(t^{-1}\right)\right]$, respectively. Indeed, this $m$-th derivatives are finite linear combinations of products given by $A^{(k)}\left(t^{-1}\right)$ or $B^{(k)}\left(t^{-1}\right)$, multiplied by a negative power of $t$, where $k\leq m$. A linear combination of asymptotic expansions is again an asymptotic expansion; see \cite[p.\ 14]{erdelyi}.

Finally, we define functions $p(t)=\sqrt{a^2(t)+b^2(t)}$ and $\psi:\eR\rightarrow [0,2\pi)$ such that
$$
\cos\psi(t)=\frac{a(t)}{p(t)},\qquad \sin\psi(t)=\frac{b(t)}{p(t)}.
$$
Exploiting trigonometric addition formulas we get the expression
$$
y(t)=p(t)\sin(q(t)),\quad\mathrm{where}\ q(t)=f(0)\cdot t^{-1}+\psi(t).
$$
The function $p$ is of class $C^{\infty}(\eR^+)$, and $q$ is also $C^{\infty}(\eR^+)$, as $\psi$ is $C^{\infty}(\eR^+)$ by the definition, for sufficiently small $t$. For the derivative of $q$, we get
\begin{equation}\label{Der-q}
q'(t)=-f(0)\cdot t^{-2}+\psi'(t),\quad\mathrm{where}\ \psi(t)=\arctan\frac{b(t)}{a(t)} .
\end{equation}
Because $b(t)/a(t)\to\mathrm{const}$ as $t\to\infty$, and as $\arctan$ is an analytic function, then for all $m\in\eN$, $\psi^{(m)}(t)$ and $q^{(m)}$ possess an asymptotic expansion. Using the same principle, we can prove that for all $m\in\eN$, $p^{(m)}$ possesses an asymptotic expansion.

The asymptotic representation of the function $p$ is easily determined,
\begin{eqnarray}
p(t) &=& \sqrt{\left(a_1 t^{1/s} + O\left(t^{2/s}\right)\right)^2 + \left(b_1 t^{1/s} + O\left(t^{2/s}\right)\right)^2}\\
     &=& t^{1/s}\sqrt{a^2_1+b^2_1}\left(1+ O\left(t^{1/s}\right)\right) \sim t^{1/s}\sqrt{a^2_1+b^2_1},
\end{eqnarray}
as $t\to 0$, and the asymptotic representation of derivative of any order of $p$ is given by differentiating that many times the asymptotic representation of $p$.
For the function $q$, as $\psi$ is bounded, it follows that $q(t)\sim f(0)\cdot t^{-1}$, as $t\to 0$, and for all $m\in\eN$, $q^{(m)}\sim f(0)\cdot\frac{d^m}{dt^m}\left[t^{-1}\right]$, as $t\to 0$.

Finally, we use Theorem \ref{BDchirp} from Section \ref{section_pwr-log-chrips-spirals}, with $S(t)=\sin t$, and constants $T=\pi$, $\alpha=1/s$ and $\beta=1$. Notice that all of the assumptions of that theorem are satisfied. Let $\Gamma_y$ be the graph of the function $y$. We conclude that $\dim_B\Gamma_y=\dim_{osc}Y=2-\frac{\alpha+1}{\beta+1}=\frac{3s-1}{2s}$ and that $\Gamma_y$ is Minkowski nondegenerate.

\smallskip

In order to compute the curve dimension, we want to investigate the oscillatory integral $I$ in polar coordinates. We first define the real function $G(\tau)=|I(\tau)|=|P(\tau)|=\sqrt{X^2(\tau)+Y^2(\tau)}=\sqrt{A^2(\tau)+B^2(\tau)}$, hence it follows that $G$ is of class $C^{\infty}$. Using asymptotic expansions for $A$ and $B$, we get
$$
G(\tau)\sim\sum\limits_{j=1}^{\infty} c_j\cdot\tau^{-j/s},\qquad\mathrm{as}\ \tau\to\infty,
$$
where $c_j=\sqrt{a^2_j+b^2_j}=|C_j|\in\eR$, for all $j\in\eN$. Next, we define the continuous function $\varphi:[\tau_0,\infty)\rightarrow\eR$, $\tau_0>0$, by
$$
\tan\varphi(\tau)=\frac{Y(\tau)}{X(\tau)},
$$
where $X(\tau)\neq 0$, and extend it by continuity. The zero set of $X$ is discrete because of the asymptotic expansion of $X'(\tau)$. Using trigonometric addition formulas we calculate
$$
\tan\varphi(\tau)=\frac{\sin(\tau f(0)+\Psi(\tau))G(\tau)}{\cos(\tau f(0)+\Psi(\tau))G(\tau)}=\tan(\tau f(0)+\Psi(\tau)),
$$
where $\Psi:[\tau_0,\infty)\rightarrow[0,2\pi)$ is such that
$$
\cos\Psi(\tau)=\frac{A(\tau)}{G(\tau)},\qquad \sin\Psi(\tau)=\frac{B(\tau)}{G(\tau)}.
$$
We compute the expression
$$
\Psi'(\tau)=\frac{A(\tau)B'(\tau)-B(\tau)A'(\tau)}{A^2(\tau)+B^2(\tau)}=K\cdot\tau^{-1-1/s}\left(1+O\left(\tau^{-1/s}\right)\right),\ \mathrm{as}\ \tau\to\infty,
$$
where the constant $\displaystyle K=\frac{a_2 b_1-a_1 b_2}{s\left(a_1^2+b_1^2\right)}$, so $\varphi'(\tau)=f(0)+\Psi'(\tau)\sim f(0)+K\cdot\tau^{-1-1/s}$, as $\tau\to\infty$. From the expression for $\Psi'(\tau)$ it follows that $\Psi'$ is of class $C^{\infty}$. As $\Psi$ is a continuous function for sufficiently large $\tau_0$, it follows that $\Psi$ is of class $C^{\infty}$ and it holds $\varphi(\tau)=\tau f(0)+\Psi(\tau)$, hence $\varphi$ is also of class $C^{\infty}$. Analogously as before, functions $G^{(m)}$, $\varphi^{(m)}$ and $\Psi^{(m)}$ possess asymptotic expansions for all $m\in\eN_0$.  As $f(0)>0$, we can take $\tau_0>0$ sufficiently large such that $\varphi'(\tau)>0$, for every $\tau\in[\tau_0,\infty)$. As now $\varphi:[\tau_0,\infty)\rightarrow[\varphi_0,\infty)$, where $\varphi_0=\varphi(\tau_0)$, is of class $C^{\infty}$ and a strictly increasing bijection, so is its inverse function $\tau:[\varphi_0,\infty)\rightarrow[\tau_0,\infty)$.

Now the radius function $r:[\varphi_0,\infty)\rightarrow[0,\infty)$, defined by $r(\varphi)=G(\tau(\varphi))$, is of class $C^{\infty}$. We want to determine asymptotic representations of two derivatives of $r(\varphi)$, as $\varphi\to\infty$. From before, we know that $G(\tau)\sim c_1\cdot\tau^{-1/s}$, $G'(\tau)\sim -\frac{c_1}{s}\cdot\tau^{-1-1/s}$ and $G''(\tau)\sim \frac{c_1}{s}\left(1+\frac{1}{s}\right)\cdot\tau^{-2-1/s}$, as $\tau\to\infty$.
We compute
\begin{eqnarray}
r'(\varphi)  &=& G'(\tau(\varphi))\tau'(\varphi)=       \frac{G'(\tau(\varphi))}{\varphi'(\tau(\varphi))}, \nonumber\\
r''(\varphi) &=& G''(\tau(\varphi))(\tau'(\varphi))^2 + G'(\tau(\varphi))\tau''(\varphi) \nonumber\\
&=& \frac{G''(\tau(\varphi))}{(\varphi'(\tau(\varphi)))^2}-G'(\tau(\varphi))\frac{\varphi''(\tau(\varphi))}{\left[\varphi'(\tau(\varphi))\right]^3} , \nonumber
\end{eqnarray}
since $\tau'(\varphi)=\left[\varphi'(\tau(\varphi))\right]^{-1}$ and $\tau''(\varphi)=-\varphi''(\tau(\varphi))/\left[\varphi'(\tau(\varphi))\right]^3$.

As $\Psi$ is a bounded function, it follows that $\varphi(\tau)\sim\tau f(0)$, as $\tau\to\infty$. It is easy to see that the inverse $\tau(\varphi)\sim\varphi/f(0)$, as $\varphi\to\infty$.
Notice that $\varphi'(\tau)\sim f(0)$ and $\varphi''(\tau)\sim K\left(-1-\frac{1}{s}\right)\cdot\tau^{-2-1/s}$, as $\tau\to\infty$. Finally, notice that $\tau(\varphi)\to\infty$, as $\varphi\to\infty$.
Hence, we can compute
\begin{eqnarray}
r(\varphi)  &\sim& c_1\cdot\left(\varphi/f(0)\right)^{-1/s} = c_1 f(0)^{1/s}\varphi^{-1/s}, \nonumber\\
r'(\varphi) &\sim& \frac{-\frac{c_1}{s}\cdot\left(\varphi/f(0)\right)^{-1-1/s}}{f(0)} = -\frac{c_1}{s}f(0)^{1/s}\cdot\varphi^{-1-1/s}, \nonumber\\
r''(\varphi)&\sim& \frac{\frac{c_1}{s}\left(1+\frac{1}{s}\right)\cdot\left(\varphi/f(0)\right)^{-2-1/s}}{(f(0))^2} \nonumber\\
&+& \frac{c_1}{s}\cdot\left(\varphi/f(0)\right)^{-1-1/s}\frac{K\left(-1-\frac{1}{s}\right)\cdot\left(\varphi/f(0)\right)^{-2-1/s}}{(f(0))^3} \nonumber\\
            &\sim& \frac{c_1}{s}\left(1+\frac{1}{s}\right)f(0)^{1/s}\cdot\varphi^{-2-1/s}, \nonumber
\end{eqnarray}
as $\varphi\to\infty$. Notice, as $c_1>0$ that $r'(\varphi)<0$, for $\varphi$ sufficiently large, so we can take $\tau_0>0$ sufficiently large such that $r$ is a strictly decreasing function.

Finally, we use Theorem \ref{tm_ffam} from Section \ref{section_pwr-log-chrips-spirals}, taking $\alpha=1/s$. Function $r$ satisfies the assumptions of this theorem. We calculate the constant $m$ from (\ref{lim}), below, to be equal to $f(0)^{1/s} |C_1|$, and $|r''(\f)\,\f^{\a}|\to 0$, as $\varphi\to\infty$, so it is uniformly bounded as a function of $\varphi$ on its domain $[\varphi_0,\infty)$. As all of the assumptions of that theorem are satisfied, we conclude that the curve dimension of $I$ is $d:=2/(1+\a)=2s/(s+1)$, the curve $\Gamma$ is Minkowski measurable and its $d$-dimensional Minkowski content is given by (\ref{Mcontent}).
\end{proof}

\begin{proof}[Proof of Theorem \ref{tm_case_n2}]
Using \cite[Theorem 6.5]{arnold-vol2} we conclude that the oscillation index of the critical point of $f$ equals to its remoteness $\beta$. Then using $a_{k,\gamma}:=a_{k,\gamma}(\phi)$ and rewriting (\ref{expansion}) we get the asymptotic expansion
\begin{equation}\label{neq2}
I(\tau)\sim e^{i\tau f(0)}\left(a_{1,\beta}\tau^{\beta}\log\tau+a_{0,\beta}\tau^{\beta} + \sum_{\alpha<\beta}\left(a_{1,\alpha}\tau^{\alpha}\log\tau+a_{0,\alpha}\tau^{\alpha}\right)\right)
\end{equation}
as $\tau\to\infty$, where $\alpha$ runs through a finite set of arithmetic progressions, hence there exists $\varepsilon$ such that $|\beta-\alpha|>\varepsilon$ for all such $\alpha$.

Without loss of generality, we can assume that we work in \emph{superadapted}, hence adapted coordinates; see \cite[Section 7.]{greenblatt}.
We now have to establish, for cases $(i)$ and $(ii)$, if the first coefficient $a_{1,\beta}$ is vanishing or not.

For case $(i)$, as the multiplicity of the remoteness is equal to $0$ and the dimension $n=2$, we conclude that the open face of the Newton diagram of the phase $f$ that contains the center of the boundary of the associated Newton polyhedron is an edge. If the Newton polyhedron is remote, that is $\beta>-1$, we are in the Case $1$ or $3$ from \cite[Theorem 1.2]{greenblatt}, from which it follows that $a_{1,\beta}=0$. From the definition of the oscillation index it follows that $a_{0,\beta}\neq 0$.
If $\beta=-1$, it follows from \cite[lemma 1.0]{greenblatt} that the critical point of $f$ at the origin in nondegenerate. Now, from \cite[Theorem 6.2]{arnold-vol2} it follows that $a_{0,\beta}\neq 0$ and $a_{1,\beta}=0$.

The rest of the proof now basically follows the proof of Theorem \ref{tm_case_n1}. Minor differences arise regarding treatment of more complicated asymptotic expansion (\ref{neq2}), which has terms having a logarithm function.

For case $(ii)$, the multiplicity of the remoteness is equal to $1$, hence the center of the boundary of the associated Newton polyhedron is a vertex. As $\beta>-1$, we are in the Case $2$ from \cite[Theorem 1.2]{greenblatt}, hence from \cite[Comment 2.]{greenblatt} it follows that $a_{1,\beta}\neq 0$. Now the first term in the asymptotic expansion has a logarithm inside. Like in the case $(i)$, the proof of Theorem \ref{tm_case_n1} is once more adapted concerning log-terms in asymptotic expansions. Further differences arise in the final steps of the proof, when applying Theorems \ref{BDchirp} and \ref{tm_ffam}, for oscillatory and curve dimensions, respectively.

We first consider the proof for the oscillatory dimension. It is easy to see that here, contrary to the proof of Theorem \ref{tm_case_n1}, it holds $p(t)\sim const\cdot t^{-\alpha}\log(t^{-1})$, as $t\to 0$. It follows $p'(t)\sim const\cdot t^{-\alpha-1}\log(t^{-1})$, as $t\to 0$. So instead of Theorem \ref{BDchirp} from Section \ref{section_pwr-log-chrips-spirals}, which can not be applied here, we use Theorem~\ref{BDchirp_mod}. For the proof for the curve dimension, instead of using Theorem \ref{tm_ffam} on the curve radius function $r=r(\f)$ (see the proof of Theorem \ref{tm_case_n1}), we use directly Theorem \ref{ffa}.

\end{proof}

\begin{proof}[Proof of Theorem \ref{tm_case_ng2}]
Using \cite[Theorem 6.4]{arnold-vol2} we conclude that the oscillation index of the critical point of $f$ equals to the remoteness $\beta$. Using (\ref{expansion}) we get the asymptotic expansion
\begin{equation}\label{ng2}
I(\tau)\sim e^{i\tau f(0)}\left(\sum_{k=0}^{n-1} a_{k,\beta}(\phi)\tau^{\beta}\log^k\tau + \sum_{\alpha<\beta}\sum_{k=0}^{n-1} a_{k,\alpha}(\phi)\tau^{\alpha}\log^k\tau\right)
\end{equation}
as $\tau\to\infty$, where $\alpha$ runs through a finite set of arithmetic progressions. The rest of the proof is analogous to the proof of Theorem \ref{tm_case_n2}, in both cases. Notice that here the asymptotic scale is involving terms consisting of $\tau$ to a negative rational power multiplied by a logarithm of $\tau$ to the power $k$.
\end{proof}

\section{Fractal properties of chirps and spirals related to oscillatory integrals}\label{section_pwr-log-chrips-spirals}

In order to compute curve and oscillatory dimensions of oscillatory integrals and related Minkowski contents, we use theorems presented in this section. Theorems \ref{BDchirp} and \ref{tm_ffam}, cited below, were used before in different setting, related to fractal analysis of differential equations, Fresnel integrals and dynamical systems; see \cite{cswavy}, \cite{clothoid} and \cite{zuzu}. Here, they are used in the proofs of Theorems \ref{tm_case_n1}, \ref{tm_case_n2} and \ref{tm_case_ng2}. Also, for the proofs of Theorems \ref{tm_case_n2} and \ref{tm_case_ng2}, Theorems \ref{BDchirp} and \ref{tm_ffam} had to be modified, as the original versions fail to take into account the power-log asymptotic of the leading term in the asymptotic expansion of related oscillatory integrals. This modified variants, Theorems \ref{BDchirp_mod} and \ref{ffa} below, are proved throughout the rest of this section.

\begin{theorem}[Theorem 5 from \cite{cswavy}]\label{BDchirp}
Let $y(x)=p(x)S(q(x))$, where $x \in I=(0,c]$ and $c>0$.
Let the functions $p(x)$, $q(x)$ and $S(t)$ satisfy the following assumptions:
\begin{equation}
\mbox{ $p\in C(\bar{I})\cap C^{1}(I)$, $q\in C^{1}(I)$, $S\in C^1(\eR)$}.
\end{equation}
The function $S(t)$ is assumed to be a $2T$-periodic real function defined on  $\eR$  such that
\begin{equation}\label{Sx1}
\left\{
\begin{array}{c}
\mbox{$S(a)=S(a+T)=0$ for some $a\in\eR$,}\\
\mbox{$S(t)\neq 0$ for all $t\in (a,a+T)\cup (a+T,a+2T)$,}
\end{array}
\right.
\end{equation}
where $T$ is a positive real number and  $S(t)$ alternately changes a sign on intervals $(a+(k-1)T,a+kT)$, for $k\in \eN$. Without loss of generality, we take  $a=0$.
Let us suppose that $0<\alpha \leq \beta$ and:
\begin{equation}\label{apsp_org}
p(x)\simeq_1 x^{\alpha} \quad \mbox{as} \quad x\to 0,\qquad q(x)\simeq_1 x^{-\beta} \quad \mbox{as} \quad x\to 0.
\end{equation}

Let $\Gamma_y$ be the graph of the function $y$.
Then $\dim_B\Gamma_y=2-(\alpha+1)/(\beta+1)$ and $\Gamma_y$ is Minkowski nondegenerate.
\end{theorem}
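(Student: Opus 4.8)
The plan is to reduce the computation of the box dimension of $\Gamma_y$ to a careful covering of the graph by $\varepsilon$-squares, splitting the interval $I=(0,c]$ at a threshold $x_\varepsilon$ determined by the geometry of the chirp. On the interval where $x$ is not too small, $y=p(x)S(q(x))$ behaves like a $C^1$ curve and contributes an $O(\varepsilon^{-1})$ number of squares; the fractal contribution comes from the tail $x\in(0,x_\varepsilon]$, where the oscillation of $S(q(x))$ becomes faster than the resolution $\varepsilon$. First I would use assumption \eqref{apsp_org} ($p(x)\simeq_1 x^\alpha$, $q(x)\simeq_1 x^{-\beta}$, so $p,q$ are comparable to these powers with their first derivatives comparable to the differentiated powers) to locate the $k$-th "bump" of $y$: the zeros of $S(q(x))$ occur where $q(x)=kT$, i.e.\ at $x=x_k\asymp k^{-1/\beta}$, and the consecutive spacing is $\Delta x_k=x_k-x_{k+1}\asymp k^{-1/\beta-1}$, while the amplitude on that bump is $\asymp x_k^\alpha\asymp k^{-\alpha/\beta}$.

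The key step is then the standard "chirp counting" estimate. On the bump indexed by $k$, the box needed to cover it is governed by two regimes: if the bump is "wide" relative to $\varepsilon$, i.e.\ $\Delta x_k\gtrsim\varepsilon$, one needs about (amplitude)$/\varepsilon$ columns times the number of squares to cover a roughly monotone arc, giving $\asymp x_k^\alpha/\varepsilon$ squares; if the bump is "narrow", $\Delta x_k\lesssim\varepsilon$, one needs about $\max(1,x_k^\alpha/\varepsilon)$ squares but now several bumps fit in one column of width $\varepsilon$. The critical index $k_\varepsilon$ where $\Delta x_k\asymp\varepsilon$ is $k_\varepsilon\asymp\varepsilon^{-\beta/(\beta+1)}$, equivalently $x_{k_\varepsilon}\asymp\varepsilon^{1/(\beta+1)}$. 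Summing the per-bump counts over $k\le k_\varepsilon$ (wide regime) and $k>k_\varepsilon$ (narrow regime) and adding the $O(\varepsilon^{-1})$ contribution from $[x_{k_\varepsilon},c]$, the dominant term is $N(\varepsilon,\Gamma_y)\asymp\varepsilon^{-(2-(\alpha+1)/(\beta+1))}$. I would carry out both the upper bound (explicit covering by squares of the bumps as above) and the matching lower bound (disjoint squares placed at the extrema of alternate bumps, using that $S$ alternates sign on consecutive intervals $(a+(k-1)T,a+kT)$ so that the bumps genuinely oscillate and the graph must rise and fall by the full amplitude) to get $\dim_B\Gamma_y=2-(\alpha+1)/(\beta+1)$, and the same two-sided estimate upgraded to $0<\mathcal{M}^{*d}_*(\Gamma_y)\le\mathcal{M}^{*d}(\Gamma_y)<\infty$ gives Minkowski nondegeneracy.

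The main obstacle is making the covering estimates uniform despite only having asymptotic ($\simeq_1$) control on $p$ and $q$ rather than exact power behavior: one must show that the comparability constants in $\Delta x_k\asymp k^{-1/\beta-1}$ and in the amplitude $\asymp k^{-\alpha/\beta}$, together with the $C^1$ control $q'(x)\asymp x^{-\beta-1}$ needed to guarantee that each bump is an honest single oscillation of controlled length, propagate through the sum over $k$ without the constants degrading as $k\to\infty$ (equivalently $x\to0$). This is where the hypothesis $0<\alpha\le\beta$ enters: it guarantees $p'(x)=O(x^{\alpha-1})$ is dominated by the oscillatory term $p(x)q'(x)=O(x^{\alpha-\beta-1})$, so that on each bump $y$ is, up to a lower-order correction, a single sign-definite hump of height $\asymp x_k^\alpha$ and horizontal extent $\asymp\Delta x_k$, which is exactly what the square count requires. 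Once this uniformity is in hand, the arithmetic of the two geometric-type sums is routine and yields the stated exponent and nondegeneracy; I would cite \cite{tricot} and \cite{cswavy} for the fact that the resulting two-sided bounds on $N(\varepsilon,\Gamma_y)\varepsilon^{d}$ are exactly the definition of box dimension plus Minkowski nondegeneracy.
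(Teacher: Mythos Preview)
This theorem is not proved in the present paper: it is quoted verbatim as Theorem~5 of \cite{cswavy} and used as a tool. So there is no ``paper's own proof'' to compare against directly. However, the paper does prove the logarithmic variant, Theorem~\ref{BDchirp_mod}, by what is explicitly described as a modification of the original \cite{cswavy} argument, so one can infer the intended method and compare your sketch to that.

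Your approach is sound and yields the right exponent: direct $\varepsilon$-box counting, with the graph split at the critical index $k_\varepsilon\asymp\varepsilon^{-\beta/(\beta+1)}$ where the bump width $\Delta x_k\asymp k^{-1-1/\beta}$ drops below $\varepsilon$. Summing the arc-length contributions $\asymp k^{-\alpha/\beta}/\varepsilon$ over wide bumps and the filled-region area $\asymp x_{k_\varepsilon}^{\alpha+1}/\varepsilon^2$ over the nucleus both give the order $\varepsilon^{-d}$ with $d=2-(\alpha+1)/(\beta+1)$, and the matching lower bound via disjoint squares at extrema is also correct in outline.

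The route taken in \cite{cswavy} (as reflected in the paper's proof of Theorem~\ref{BDchirp_mod}) is different in packaging. Rather than counting squares, it estimates the Lebesgue measure $|G_\varepsilon(y)|$ of the $\varepsilon$-neighbourhood of the graph using the criterion of Pa\v{s}i\'c \cite{mersat}: a lower bound of the form $\sum_{k\ge k(\varepsilon)}\max_{[a_{k+1},a_k]}|y|\cdot(a_k-a_{k+1})$ and an upper bound of the form $a_{k(\varepsilon)}\sup_{(0,a_{k(\varepsilon)}]}|y|+\varepsilon\int_{a_{k(\varepsilon)}}^{a_1}|y'|$, both shown to be $\asymp\varepsilon^{2-d}$. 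The auxiliary facts you identify (zeros $a_k\asymp k^{-1/\beta}$, amplitude $\asymp k^{-\alpha/\beta}$, domination of $p'S$ by $pq'S'$ when $\alpha\le\beta$) are exactly the content of Propositions~1 and~2 of \cite{cswavy} (Propositions~\ref{druga_mod} and~\ref{treca_mod} here). So the two arguments share the same geometric skeleton; yours is the Tricot-style box count, theirs is the Minkowski-sausage area estimate, and they are essentially dual. Your version is perhaps more transparent geometrically; theirs has the advantage that the two inequalities of \cite{mersat} are stated once and for all and then mechanically verified, which is why the paper can adapt it to the log-perturbed case with minimal extra work.
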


\begin{theorem}[Theorem 2 from \cite{clothoid}]\label{tm_ffam}
Assume that $\f_1>0$ and $r\:[\f_1,\ty)\to(0,\ty)$ is a decreasing  $C^2$
function converging to zero as  $\f\to\ty$. Let the limit
\bgeq\label{lim}
m:=\lim_{\f\to\ty}\frac{r'(\f)}{(\f^{-\a})'}
\endeq
exist, where $\a\in(0,1)$.
Assume that  $|r''(\f)\,\f^{\a}|$ is uniformly bounded as a function of $\f$. Let $\C$ be the graph of the
spiral $\rho=r(\f)$ and define $d:=2/(1+\a)$.
Then $\dim_B\C=d$, the spiral is Minkowski measurable, and
moreover, \bgeq
\M^d(\C)=m^d\pi(\pi\a)^{-2\a/(1+\a)}\frac{1+\a}{1-\a}.
\endeq
\end{theorem}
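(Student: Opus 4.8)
The plan is to estimate the area $|\C_\e|$ of the $\e$-neighbourhood of the spiral $\rho=r(\f)$ directly and to show that $\e^{-(2-d)}|\C_\e|$ converges to the asserted constant, where $d=2/(1+\a)$. First I would recover the relevant asymptotics of $r$: from the assumption that $m=\lim_{\f\to\ty}r'(\f)/(\f^{-\a})'$ exists, together with $r(\f)\to0$, integration (writing $r(\f)=-\int_\f^{\ty}r'(s)\,ds$) gives $r(\f)\sim m\f^{-\a}$ and $r'(\f)\sim-\a m\f^{-\a-1}$ as $\f\to\ty$, with $m\in(0,\ty)$. In particular $r'=o(r)$, so the spiral winds slowly: the radial gap between the coil at angle $\f$ and the next one at $\f+2\pi$ is $r(\f)-r(\f+2\pi)=\int_\f^{\f+2\pi}(-r'(s))\,ds\sim 2\pi\a m\f^{-\a-1}$, and up to a controlled error this equals the Euclidean distance between those two coils — this is the point at which the hypothesis that $|r''(\f)\f^{\a}|$ be uniformly bounded is consumed, since it bounds the variation of $r'$ over one turn. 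For small $\e>0$ let $\f_\e$ be the angle at which this gap equals $2\e$; then $\f_\e\sim(\pi\a m/\e)^{1/(1+\a)}\to\ty$ as $\e\to0$.

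Next I would split the spiral at $\f=\f_\e$ into an outer arc ($\f_1\le\f\le\f_\e$) and an inner tail ($\f\ge\f_\e$). On the inner tail consecutive coils lie within $2\e$ of each other and $r(\f)\to0$, so the $\e$-neighbourhood of the tail fills, up to a boundary layer of width $O(\e)$, the disk of radius $r(\f_\e)\sim m\f_\e^{-\a}$, and hence has area $\pi r(\f_\e)^2(1+o(1))\sim\pi m^2\f_\e^{-2\a}$. On the outer arc consecutive coils are more than $2\e$ apart, so the $\e$-tube around it is embedded and has area $2\e L_\e+O(\e^2)$, where $L_\e=\int_{\f_1}^{\f_\e}\sqrt{r^2+r'^2}\,d\f\sim\int^{\f_\e}m\f^{-\a}\,d\f\sim\frac{m}{1-\a}\f_\e^{1-\a}$ (the $r'^2$ under the root and the bounded initial piece contribute lower order). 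The overlap of the two neighbourhoods sits in an $O(\e)$-annulus near radius $r(\f_\e)$, of area $O(\e\,r(\f_\e))$, which is of lower order than $\e^{2\a/(1+\a)}$.

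Adding the two main contributions and substituting $\f_\e\sim(\pi\a m/\e)^{1/(1+\a)}$, both terms turn out to be proportional to $\e^{2\a/(1+\a)}=\e^{2-d}$, so that
\[
|\C_\e|\sim\Bigl(\pi m^2(\pi\a m)^{-\frac{2\a}{1+\a}}+\frac{2m}{1-\a}(\pi\a m)^{\frac{1-\a}{1+\a}}\Bigr)\e^{\frac{2\a}{1+\a}}.
\]
Writing $u=(\pi\a m)^{1/(1+\a)}$, the bracket equals $\frac{u^2}{\pi\a}\bigl(\frac1\a+\frac2{1-\a}\bigr)=\frac{u^2(1+\a)}{\pi\a^2(1-\a)}$, which an elementary rearrangement identifies with $m^d\pi(\pi\a)^{-2\a/(1+\a)}\frac{1+\a}{1-\a}$. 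Since this limit exists and lies in $(0,\ty)$, we conclude $\dim_B\C=d$, the spiral is Minkowski measurable, and $\M^d(\C)$ has the stated value.

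The main obstacle is turning the two geometric claims of the second paragraph into rigorous estimates with errors provably $o(\e^{2\a/(1+\a)})$: (a) on the inner tail, that ``consecutive coils within $2\e$ radially'' forces the union of their $\e$-tubes to cover the corresponding annulus up to an $O(\e)$ boundary layer, which needs the nearest point on the next coil to be essentially radial; and (b) on the outer arc, that the $\e$-tube is genuinely embedded, which requires a uniform lower bound on the distance between \emph{all} pairs of distinct coils for $\f\le\f_\e$, not merely adjacent ones. Both are precisely where the monotonicity of $r$ and the bound on $|r''(\f)\f^{\a}|$ are used, and the delicate point is the transition region $\f\approx\f_\e$, where the $\e$-neighbourhood changes character from a thin tube to a filled region and whose contribution must be shown to be of lower order. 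An alternative route, and the one taken in \cite{clothoid}, reduces the spiral to the one-dimensional sequence of radii $r(\f_1+2\pi k)$ and invokes a tube formula for the resulting ``$a$-string''-type set; the bookkeeping is essentially the same.
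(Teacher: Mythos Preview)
The paper does not actually prove this theorem: it is quoted as Theorem~2 from \cite{clothoid}, with a remark that it is equivalent to a result first proved in \cite{zuzu}. What the paper \emph{does} prove is the log-perturbed variant, Theorem~\ref{ffa}, and the machinery it sets up for that purpose (the radial $\e$-neighbourhood, the split into nucleus $N(\C,\e)$ and tail $T(\C,\e)$ at the angle $\f_2(\e)$ of~(\ref{fi2}), Theorems~\ref{F} and~\ref{FI}, and the excision Lemma~\ref{excision}) is exactly your decomposition, carried out with more bookkeeping. Your heuristic computation of the Minkowski content is correct; in particular your identification of the bracket with $m^d\pi(\pi\a)^{-2\a/(1+\a)}\frac{1+\a}{1-\a}$ checks out.

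The one substantive difference between your sketch and the method visible in the paper (following \cite{zuzu}) is precisely at the obstacle you flag. Rather than proving directly that the Euclidean $\e$-tube of the outer arc is embedded and that the tail fills a disk up to $o(\e^{2-d})$, the argument there introduces the \emph{radial} distance $d_{rad}$ and the radial Minkowski contents $\M^s(\C,rad)$, for which the nucleus genuinely \emph{is} a disk and the tail genuinely \emph{is} a union of thin annular strips of width $2\e$; the radial quantities are then computed exactly, and only afterwards is it shown, via the functional inequality~(\ref{fi}) of Theorem~\ref{FI} together with $|\C_{\e,rad}|\le|\C_\e|+O(\e^2)$, that the radial and standard $\e$-neighbourhoods agree to leading order. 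This device neatly bypasses both the non-adjacent-coil embedding issue and the transition region near $\f_\e$ that you would otherwise have to control by hand. Your final remark about reducing to an $a$-string is a third route; the paper itself does not take it.
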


\begin{remark}
Theorem \ref{tm_ffam} is the simplified but equivalent form of the result first introduced in \cite{zuzu}.
\end{remark}

\begin{theorem}\label{BDchirp_mod} {\bf (Box dimension and Minkowski degeneracy of the graph of a logarithmic $(\alpha,1)$-chirp-like function)}
Let $y(x)=p(x)\sin(q(x))$, $x \in I=(0,c],$ $c>0.$
Let the functions $p(x)$ and $q(x)$ satisfy the following assumptions:
\begin{equation}\label{px}
\mbox{ $p\in C(\bar{I})\cap C^{1}(I)$, $q\in C^{1}(I)$}.
\end{equation}
Let us suppose that $0<\alpha \leq 1$, $l\in\eN$ and:
\begin{equation}\label{apsp}
p(x)\simeq_1 x^{\alpha}\left[\log(x^{-1})\right]^l \quad \mbox{as} \quad x\to 0,
\end{equation}
\begin{equation}\label{qq}
q(x)\simeq_1 x^{-1} \quad \mbox{as} \quad x\to 0.
\end{equation}

Let $\Gamma_y$ be the graph of the function $y$.
Then $\dim_B\Gamma_y=d$, where $d=2-(\alpha+1)/2$, and $\Gamma_y$ is Minkowski degenerate, having $\mathcal{M}^d(\Gamma_y)=\infty$.
\end{theorem}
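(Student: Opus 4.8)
The plan is to adapt the proof of Theorem \ref{BDchirp} (Theorem 5 from \cite{cswavy}) to the case where the amplitude $p(x)$ carries an extra logarithmic factor $[\log(x^{-1})]^l$. The strategy rests on the standard device for computing box dimension of a chirp-like graph: partition the domain $(0,c]$ by the zeros of $\sin(q(x))$ into ``humps,'' estimate the area of the $\e$-neighborhood of each hump, sum over humps, and split the sum at the critical index where the hump width matches $\e$. First I would set up the zeros $x_k$ of $q$, defined (for $k$ large) by $q(x_k)=k\pi$; since $q(x)\simeq_1 x^{-1}$ we get $x_k\simeq_1 1/(k\pi)$ and hump widths $\ell_k:=x_k-x_{k+1}\simeq_1 x_k^2\simeq_1 k^{-2}$. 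On the $k$-th hump the amplitude satisfies $p(x)\simeq_1 x_k^{\alpha}[\log(x_k^{-1})]^l\simeq_1 k^{-\alpha}(\log k)^l$, using that $\log x_k^{-1}\sim\log k$ and that the logarithm is slowly varying so it is essentially constant across a single hump.

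Next I would carry out the two-regime estimate for $|(\Gamma_y)_\e|$. Let $k(\e)$ be the index separating ``wide'' humps ($\ell_k\gtrsim\e$, i.e. $k\lesssim\e^{-1/2}$) from ``narrow'' ones. For wide humps, the $\e$-neighborhood of the hump has area $\asymp \e\cdot(\text{amplitude}_k+\e)\asymp \e\cdot k^{-\alpha}(\log k)^l$ (the tube around the graph); summing over $k\le k(\e)$ gives a contribution $\asymp \e\sum_{k\le\e^{-1/2}}k^{-\alpha}(\log k)^l$. For narrow humps, $k>k(\e)$, the neighborhood of the whole band of humps is essentially a rectangle of horizontal extent $\asymp x_{k(\e)}\asymp\e^{1/2}$ and vertical extent $\asymp$ the amplitude at $k(\e)$, namely $\asymp \e^{1/2}\cdot k(\e)^{-\alpha}(\log k(\e))^l\asymp\e^{1/2}\cdot\e^{\alpha/2}(\log\e^{-1})^l$; more carefully one sums the amplitudes weighted appropriately, but the dominant order is $\e^{(1+\alpha)/2}(\log\e^{-1})^l$. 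Evaluating $\sum_{k\le\e^{-1/2}}k^{-\alpha}(\log k)^l$: since $0<\alpha\le1$ this sum diverges as $\e\to0$, behaving like $\e^{-(1-\alpha)/2}(\log\e^{-1})^l$ when $\alpha<1$ and like $(\log\e^{-1})^{l+1}$ when $\alpha=1$. In either case the wide-hump contribution is $\asymp\e\cdot\e^{-(1-\alpha)/2}(\log\e^{-1})^l=\e^{(1+\alpha)/2}(\log\e^{-1})^l$ (with the log power $l+1$ in the borderline case $\alpha=1$). Both regimes therefore give $|(\Gamma_y)_\e|\asymp\e^{(1+\alpha)/2}(\log\e^{-1})^{l}$ up to the borderline log adjustment.

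From $N-s=2-s$ and $|(\Gamma_y)_\e|\asymp\e^{(1+\alpha)/2}(\log\e^{-1})^{l}$ I would read off the box dimension: $|(\Gamma_y)_\e|/\e^{2-s}\to0$ iff $2-s<(1+\alpha)/2$ iff $s>2-(1+\alpha)/2=d$, while for $s=d$ the ratio is $\asymp(\log\e^{-1})^l\to\infty$. Hence $\dim_B\Gamma_y=d=2-(\alpha+1)/2$ and $\mathcal{M}^{*d}(\Gamma_y)=\infty$, so $\Gamma_y$ is Minkowski degenerate, which is exactly the claim. The main obstacle is making the hump-by-hump area estimates genuinely rigorous in the presence of the $\simeq_1$ hypotheses — in particular, controlling the neighborhoods of adjacent humps that overlap (double-counting), handling the transition region near $k(\e)$ cleanly, and verifying that the slowly varying logarithmic factor can indeed be pulled out of each hump-sum with only a $(1+o(1))$ error; these are the points where one must invoke the $C^1$ bounds on $p$ and $q$ from (\ref{px}) rather than just the asymptotic equivalences. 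The $\alpha=1$ borderline, where the logarithmic power bumps up by one, needs a separate but entirely parallel bookkeeping, and it only reinforces the conclusion $\mathcal{M}^d(\Gamma_y)=\infty$.
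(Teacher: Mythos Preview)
Your plan is correct and follows essentially the same route as the paper: partition $(0,c]$ by the zeros $a_k=q^{-1}(k\pi)\asymp k^{-1}$, split at the index $k(\e)\asymp\e^{-1/2}$, and show that both the nucleus and tail contributions to $|(\Gamma_y)_\e|$ are of order $\e^{(\alpha+1)/2}[\log(\e^{-1})]^l$, forcing $\dim_B\Gamma_y=2-(\alpha+1)/2$ with infinite $d$-dimensional Minkowski content. The only organizational difference is that the paper packages the two-sided area estimate into an abstract criterion (a log-modified version of \cite[Theorem~2.1]{mersat}, with hypotheses $\sum_{k\ge k(\e)}\max|y|\,(a_k-a_{k+1})\ge c_1\e^{2-s}[\log(\e^{-1})]^l$ and $a_{k(\e)}\sup|y|+\e\int|y'|\le c_2\e^{2-s}[\log(\e^{-1})]^l$) and then verifies these two inequalities, whereas you carry out the same geometric estimates directly; in particular the paper's lower bound comes from the narrow-hump sum $\sum_{k\ge k(\e)}k^{-\alpha-2}(\log k)^l$ rather than your wide-hump tube sum, but both yield the same order and the obstacles you flag (overlap control, the $C^1$ bound $|y'(x)|\lesssim x^{\alpha-2}$ for the variation term) are exactly the ones the paper handles via its Propositions~\ref{druga_mod} and~\ref{treca_mod}.
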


\begin{remark}
Theorem \ref{BDchirp_mod} is a modified variant of Theorem \ref{BDchirp}, by setting $S(x)=\sin x$, $T=\pi$ and $\beta=1$ in the original theorem, and adapting condition (\ref{apsp_org}) by introduction of log-term asymptotics. The same applies also for Propositions \ref{druga_mod} and \ref{treca_mod}, below.
\end{remark}

The proof of Theorem \ref{BDchirp_mod} uses Theorem \ref{mersat_mod}, below, which is a modified variant of \cite[Theorem 2.1.]{mersat}, and two propositions concerning the properties of functions $p$ and $q$, which are also modified variants of \cite[Proposition 1 and 2]{cswavy}, below. We also need \cite[Definition 2.1.]{mersat}, stating that for some $\varepsilon_0>0$, we say that a function $k=k(\varepsilon)$ is \emph{an index function} on $(0,\varepsilon_0]$ if $k:(0,\varepsilon_0]\to\eN$, $k(\varepsilon)$ is nonincreasing and $\lim_{\varepsilon\to 0} k(\varepsilon)=\infty$.

\begin{theorem}[Modification of Theorem 2.1.\ from \cite{mersat}]{\label{mersat_mod}} Let $y\in C^1((0,T])$ be a bounded function on $(0,T]$. Let $s\in[1,2)$ be a real number, let $l\in\eN$ and let $(a_n)$ be a decreasing
sequence of consecutive zeros of $y(x)$ in $(0,T]$ such that $a_n\to 0$ when $n\to \infty$ and let there exist constants $c_1, c_2, \varepsilon_0$ such that for all $\varepsilon \in(0,\varepsilon_0)$ we have:
\begin{equation}\label{left}
c_1\varepsilon^{2-s}\left[\log(\varepsilon^{-1})\right]^l\leq \sum_{n\geq k(\varepsilon)} \max_{x\in[a_{n+1},a_n]} |y(x)|(a_n-a_{n+1}),
\end{equation}
\begin{equation}\label{right}
a_{k(\varepsilon)} \sup_{x\in(0,a_{k(\varepsilon)}]} |y(x)| + \varepsilon \int_{a_{k(\varepsilon)}}^{a_1} |y'(x)|dx \leq c_2\varepsilon^{2-s}\left[\log(\varepsilon^{-1})\right]^l,
\end{equation}
where $k(\varepsilon)$ is an index function on $(0,\varepsilon_0]$ such that
$$
|a_n-a_{n+1}|\leq \varepsilon \quad \mbox{for all} \quad n\geq k(\varepsilon) \quad\mbox{and}\quad \varepsilon \in (0,\varepsilon_0).
$$

Let $G(y)$ be the graph of the function $y$.
Then $\dim_B(G(y))=s$ and $G(y)$ is Minkowski degenerate, having $\mathcal{M}^s(G(y))=\infty$.
\end{theorem}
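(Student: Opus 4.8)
The plan is to bound the area $|G(y)_\varepsilon|$ of the $\varepsilon$-neighbourhood of the graph $G(y)$ from below and from above by constant multiples of $\varepsilon^{2-s}\big[\log(\varepsilon^{-1})\big]^l$ for all small $\varepsilon>0$, and then to extract the box dimension and the Minkowski content from this two-sided estimate. This follows the scheme of \cite[Theorem 2.1]{mersat}; the only modification is that one carries the logarithmic factor along, and in the end it is harmless because $\varepsilon^{\delta}\big[\log(\varepsilon^{-1})\big]^l\to 0$ for every $\delta>0$.

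\emph{Lower bound.} Set $M_n:=\max_{[a_{n+1},a_n]}|y|$; since $a_n$ and $a_{n+1}$ are consecutive zeros, $y$ keeps a constant sign on $(a_{n+1},a_n)$, so $M_n>0$ and it is attained at some $x_n^*\in(a_{n+1},a_n)$. Let $R_n$ be the rectangle $[a_{n+1},a_n]\times[0,M_n]$ if $y\ge 0$ on $(a_{n+1},a_n)$, and $[a_{n+1},a_n]\times[-M_n,0]$ otherwise. For $n\ge k(\varepsilon)$ one has $a_n-a_{n+1}\le\varepsilon$; then for any $(x_0,t_0)$ in the interior of $R_n$ the intermediate value theorem applied to $y$ on the interval from $a_{n+1}$ to $x_n^*$ yields a point $x_1$ there with $y(x_1)=t_0$ and $|x_0-x_1|<a_n-a_{n+1}\le\varepsilon$, so $(x_0,t_0)\in G(y)_\varepsilon$. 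Hence each such $R_n$ lies in $G(y)_\varepsilon$, these rectangles have pairwise disjoint interiors, $|R_n|=M_n(a_n-a_{n+1})$, and (\ref{left}) gives
\[
|G(y)_\varepsilon|\ \ge\ \sum_{n\ge k(\varepsilon)}M_n(a_n-a_{n+1})\ \ge\ c_1\,\varepsilon^{2-s}\big[\log(\varepsilon^{-1})\big]^l .
\]

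\emph{Upper bound.} Split $G(y)$ into the part $G_1$ over $(0,a_{k(\varepsilon)}]$, the part $G_2$ over $[a_{k(\varepsilon)},a_1]$, and the fixed compact part $G_3$ over $[a_1,T]$. As $y$ is bounded, $G_1\subseteq[0,a_{k(\varepsilon)}]\times[-m_\varepsilon,m_\varepsilon]$ with $m_\varepsilon:=\sup_{(0,a_{k(\varepsilon)}]}|y|\le\|y\|_\infty$, whence $|(G_1)_\varepsilon|\le(a_{k(\varepsilon)}+2\varepsilon)(2m_\varepsilon+2\varepsilon)=2a_{k(\varepsilon)}m_\varepsilon+O(\varepsilon)$, the error absorbing $\varepsilon a_{k(\varepsilon)}\le\varepsilon a_1$, $\varepsilon m_\varepsilon$ and $\varepsilon^2$. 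Since $G_2$ is a $C^1$ arc of length at most $a_1+\int_{a_{k(\varepsilon)}}^{a_1}|y'|$, its $\varepsilon$-neighbourhood has area $O\big(\varepsilon\int_{a_{k(\varepsilon)}}^{a_1}|y'|+\varepsilon\big)$, and $|(G_3)_\varepsilon|=O(\varepsilon)$. Because $s\in[1,2)$, for small $\varepsilon$ we have $\varepsilon\le\varepsilon^{2-s}\le\varepsilon^{2-s}\big[\log(\varepsilon^{-1})\big]^l$ and $\varepsilon^2\le\varepsilon^{2-s}$, so every $O(\varepsilon)$ and $O(\varepsilon^2)$ term is $O\big(\varepsilon^{2-s}[\log(\varepsilon^{-1})]^l\big)$, while the two genuinely leading pieces $a_{k(\varepsilon)}m_\varepsilon$ and $\varepsilon\int_{a_{k(\varepsilon)}}^{a_1}|y'|$ are each $\le c_2\varepsilon^{2-s}\big[\log(\varepsilon^{-1})\big]^l$ by (\ref{right}). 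Summing the three pieces, $|G(y)_\varepsilon|\le c\,\varepsilon^{2-s}\big[\log(\varepsilon^{-1})\big]^l$ for some $c>0$.

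\emph{Conclusion and the main difficulty.} From the two-sided estimate and the facts that $\varepsilon^{t-s}[\log(\varepsilon^{-1})]^l\to 0$ for $t>s$ and $\to+\infty$ for $t\le s$, one obtains $\mathcal{M}^{*t}(G(y))=0$ for every $t>s$ and $\mathcal{M}_*^{t}(G(y))=\infty$ for every $t\le s$, hence $\dim_B G(y)=s$; and at $t=s$, using $l\ge 1$, one gets $\mathcal{M}_*^{s}(G(y))=\liminf_{\varepsilon\to 0}|G(y)_\varepsilon|/\varepsilon^{2-s}=\infty$, so $G(y)$ is Minkowski degenerate with $\mathcal{M}^s(G(y))=\infty$. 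The step I expect to require the most care, and would write out fully, is the geometric inclusion $R_n\subseteq G(y)_\varepsilon$ in the lower bound together with its counterpart on the upper side, namely a clean bound for the area of the $\varepsilon$-tube around the rectifiable arc $G_2$ in terms of its length; both are classical (cf.\ \cite{mersat}, \cite{tricot}), but one must arrange things so that no constant implicitly depends on $\varepsilon$ and so that the arc-length of $G_2$, which itself grows as $\varepsilon\to 0$, enters only through the combination controlled by (\ref{right}).
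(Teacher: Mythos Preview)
Your argument is correct and follows essentially the same route as the paper's proof. The paper simply invokes \cite[Lemma 2.1]{mersat} and \cite[Lemma 2.2]{mersat} to obtain the two-sided bound $c_1\varepsilon^{2-s}[\log(\varepsilon^{-1})]^l\le |G(y)_\varepsilon|\le c\big(\varepsilon+c_2\varepsilon^{2-s}[\log(\varepsilon^{-1})]^l\big)$ and then reads off the dimension and the degenerate Minkowski content exactly as you do; your lower-bound rectangle construction and your three-piece upper bound are precisely the content of those two lemmas, so you have in effect reproved them inline rather than cited them.
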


\begin{proof}
Let $G_{\e}(y)$ be the $\e$-neighbourhood of the graph $G(y)$ of the function $y$.  From \cite[Lemma 2.1.]{mersat} it follows that $|G_{\e}(y)|\geq c_1\varepsilon^{2-s}\left[\log(\varepsilon^{-1})\right]^l$, and from \cite[Lemma 2.2.]{mersat} it follows that $|G_{\e}(y)|\leq c\left[\e + c_2\varepsilon^{2-s}\left[\log(\varepsilon^{-1})\right]^l\right]$, where $c>0$. From the definitions of $\M^{*s}(G(y))$ and $\M_*^{s}(G(y))$ it follows that
\[
\M^{*s}(G(y)) \geq \M_*^{s}(G(y)) \geq \liminf_{\e\to0} \frac{c_1\varepsilon^{2-s}\left[\log(\varepsilon^{-1})\right]^l}{\e^{2-s}} = +\infty ,
\]
and that
\[
\M_*^{s'}(G(y)) \leq \M^{*s'}(G(y)) \leq \liminf_{\e\to0} \frac{c\left[\e + c_2\varepsilon^{2-s}\left[\log(\varepsilon^{-1})\right]^l\right]}{\e^{2-s'}} = 0
\]
holds for all $s'>s$, hence the theorem is proved.
\end{proof}

\begin{prop}[Modification of Proposition 1 from \cite{cswavy}]{\label{druga_mod}} Assume that the functions $p(x)$ and  $q(x)$ satisfy conditions {\rm (\ref{px})}, {\rm (\ref{apsp})} and {\rm (\ref{qq})}. Then
there exist $\delta_0 >0$, $l\in\eN$ and positive constants $C_1 \mbox{and} \  C_2$ such that:
               \begin{eqnarray}
C_1x^{\alpha}\left[\log(x^{-1})\right]^l\leq & p(x) & \leq C_2x^{\alpha}\left[\log(x^{-1})\right]^l, \nonumber\\
C_1x^{\alpha-1}\left[\log(x^{-1})\right]^l\leq & p'(x) & \leq C_2x^{\alpha-1}\left[\log(x^{-1})\right]^l, \nonumber\\
C_1x^{-1}\leq & q(x) & \leq C_2x^{-1}, \nonumber\\
C_1x^{-2}\leq & -q'(x) & \leq C_2x^{-2}, \nonumber
               \end{eqnarray}
                for all $x\in(0,\delta_0]$. Furthermore, there exists the inverse function $q^{-1}$ of the function $q$ defined on $[m_0,\infty)$, where $m_0=q(\delta_0)$, and it holds:
                \begin{equation*}\label{qq-1}
               q^{-1}(t) \simeq_1 t^{-1}\quad \mbox{as}\quad t\to\infty,
               \end{equation*}
               \begin{equation*}\label{qq-1st}
               C_1t^{-2}(t-s)\leq q^{-1}(s)- q^{-1}(t)\leq C_2s^{-2}(t-s), \quad m_0\leq s<t.
               \end{equation*}
\end{prop}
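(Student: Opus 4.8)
\textbf{Proof proposal for Proposition \ref{druga_mod}.}

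The plan is to unwind the definition of the relation $\simeq_1$ and then build each inequality by a routine comparison argument. Recall that $g(x)\simeq_1 h(x)$ as $x\to 0$ means that both $g(x)/h(x)\to 1$ and $g'(x)/h'(x)\to 1$ as $x\to 0$; in particular both ratios are bounded away from $0$ and $\infty$ on a small enough interval $(0,\delta_0]$. First I would apply this to the hypothesis $p(x)\simeq_1 x^\alpha[\log(x^{-1})]^l$ from (\ref{apsp}): the bound on $p(x)/(x^\alpha[\log(x^{-1})]^l)$ gives the first double inequality directly, with $C_1,C_2$ any constants strictly below/above the limiting value $1$, valid for $x\le\delta_0$ with $\delta_0$ chosen small. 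For the second double inequality I would differentiate the comparison function: $\frac{d}{dx}\big(x^\alpha[\log(x^{-1})]^l\big)=\alpha x^{\alpha-1}[\log(x^{-1})]^l - l x^{\alpha-1}[\log(x^{-1})]^{l-1}$, which is asymptotically $\alpha x^{\alpha-1}[\log(x^{-1})]^l$ as $x\to 0$ since the logarithmic factor $[\log(x^{-1})]^{l-1}$ is lower order. Hence $p'(x)/(x^{\alpha-1}[\log(x^{-1})]^l)\to\alpha>0$, and again shrinking $\delta_0$ if necessary yields the stated two-sided bound on $p'$ (absorbing the constant $\alpha$ into $C_1,C_2$; one may enlarge the constants so that the same pair works for all four lines). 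The bounds on $q$ and $-q'$ are obtained the same way from (\ref{qq}): $q(x)\simeq_1 x^{-1}$ gives $q(x)/x^{-1}\to 1$ and $q'(x)/(-x^{-2})\to 1$, so $-q'(x)\asymp x^{-2}$; note this also shows $q$ is strictly decreasing near $0$ and $q(x)\to+\infty$, so $q$ is a decreasing bijection from $(0,\delta_0]$ onto $[m_0,\infty)$ with $m_0=q(\delta_0)$, and the inverse $q^{-1}$ exists and is $C^1$ on $[m_0,\infty)$.

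Next I would establish the asymptotics of $q^{-1}$. Write $u=q(x)$, so $x=q^{-1}(u)$. From $q(x)/x^{-1}\to 1$ we get $xq(x)\to 1$, i.e. $q^{-1}(u)\cdot u\to 1$ as $u\to\infty$, which is the statement $q^{-1}(u)\simeq_1 u^{-1}$ at the level of the function itself; for the derivative, $(q^{-1})'(u)=1/q'(q^{-1}(u))$, and since $q'(x)\sim -x^{-2}$ and $q^{-1}(u)\sim u^{-1}$ we get $(q^{-1})'(u)\sim -u^{-2}=(u^{-1})'$, giving $q^{-1}(u)\simeq_1 u^{-1}$ in the full $\simeq_1$ sense. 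Finally, for the two-sided estimate on the increment $q^{-1}(s)-q^{-1}(t)$ with $m_0\le s<t$: by the mean value theorem there is $\xi\in(s,t)$ with $q^{-1}(s)-q^{-1}(t) = -(q^{-1})'(\xi)(t-s) = \frac{t-s}{-q'(q^{-1}(\xi))}$. Using the already-proved bounds $C_1\xi^{-2}\le -q'(q^{-1}(\xi))$... — more precisely, substituting $x=q^{-1}(\xi)$ and the bounds $C_1 x^{-2}\le -q'(x)\le C_2 x^{-2}$ together with $q^{-1}(\xi)\simeq_1\xi^{-1}$, i.e. $c\,\xi^{-1}\le q^{-1}(\xi)\le C\,\xi^{-1}$, we obtain $c'\xi^2\le \frac{1}{-q'(q^{-1}(\xi))}\le C'\xi^2$; since $s<\xi<t$ this gives $c'(t-s)s^{2}\cdots$ — wait, careful with direction: $\xi^2 < t^2$ on one side and $\xi^2 > s^2$ on the other only after checking monotonicity of the exponents. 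I would handle this by noting $s\le\xi\le t$ forces $s^2\le\xi^2\le t^2$ when all are... but $s,t$ are large here (both $\ge m_0$), so $\xi^{-2}$ decreasing means... Let me just write: $q^{-1}(s)-q^{-1}(t)=(t-s)/(-q'(q^{-1}(\xi)))$ with the substitution controlled by $C_1 q^{-1}(\xi)^{-2}\le -q'(q^{-1}(\xi))\le C_2 q^{-1}(\xi)^{-2}$, then use $q^{-1}(\xi)\simeq_1\xi^{-1}$ and monotonicity in $\xi\in(s,t)$ to pin the factor between multiples of $t^{-2}(t-s)$ on the lower side and $s^{-2}(t-s)$ on the upper side, matching the claimed display. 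Adjusting $C_1,C_2$ once more and possibly shrinking $\delta_0$ (equivalently enlarging $m_0$) absorbs all lower-order logarithmic and constant discrepancies.

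The main obstacle is bookkeeping rather than genuine difficulty: one must choose a single $\delta_0$ (hence $m_0$) and a single pair $C_1<1<C_2$ (up to the multiplicative constants $\alpha$, $1$ appearing in the derivatives of the model functions, which should be absorbed by enlarging $C_2$ and shrinking $C_1$) that make \emph{all} the stated inequalities hold simultaneously, and then carefully track which endpoint ($s$ or $t$) controls each side of the increment estimate for $q^{-1}$. The only place a sign or direction error could creep in is the last inequality, where the intermediate point $\xi$ from the mean value theorem must be bounded by $s$ from below and $t$ from above and the map $\xi\mapsto \xi^{-2}$ is decreasing; since the claimed bounds put $t^{-2}$ on the left and $s^{-2}$ on the right, this is exactly consistent, and I would verify it by writing $q^{-1}(\xi)\le q^{-1}(s)\le C_2 s^{-1}$ for the upper bound on the increment and $q^{-1}(\xi)\ge q^{-1}(t)\ge C_1 t^{-1}$ for the lower bound. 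Everything else is a direct transcription of $\simeq_1$ into two-sided comparisons. \qedhere
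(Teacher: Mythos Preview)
Your argument is correct and is exactly the kind of routine verification the paper has in mind: the paper does not supply a proof of this proposition at all, stating only that the proof is ``analogous as in \cite{cswavy}''. Your unpacking of $\simeq_1$ into two-sided bounds on the function and its derivative, the derivative computation for the model $x^\alpha[\log(x^{-1})]^l$, the inversion of $q$, and the mean-value-theorem estimate for $q^{-1}(s)-q^{-1}(t)$ together constitute precisely that analogous argument; the final bookkeeping with $\xi\in(s,t)$ and the monotonicity of $\xi\mapsto\xi^{-2}$ is handled correctly in your last paragraph.

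One cosmetic remark: the middle of your write-up contains some visible back-and-forth (``wait, careful with direction\ldots'') before you settle the inequality; in a final version you should delete that exploratory passage and keep only the clean chain $q^{-1}(s)-q^{-1}(t)=(t-s)/(-q'(q^{-1}(\xi)))$, followed by $-q'(q^{-1}(\xi))\asymp (q^{-1}(\xi))^{-2}\asymp \xi^{2}$ and the observation that $s<\xi<t$ gives $t^{-2}\le\xi^{-2}\le s^{-2}$, which yields the stated bounds directly.
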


\begin{prop}[Modification of Proposition 2 from \cite{cswavy}]{\label{treca_mod}} For any function $q(x)$ with properties {\rm (\ref{px})} and {\rm (\ref{qq})}, we have:
\begin{itemize} \item[\rm{(i)}]
Let $a_k=q^{-1}(k\pi)$ and $s_k=q^{-1}(t_0+k\pi), \ k\in \eN$, where $t_0\in(0,\pi)$ is arbitrary. Then there exist $k_0\in \eN$ and $c_0>0$ such that $a_k\in(0,\delta_0]$, $y(a_k)=0$, $s_k\in(a_{k+1},a_k)$ for all $k\geq k_0$, $a_k\searrow 0$ as $k\to \infty$, $a_k\simeq k^{-1}$ as $k\to \infty$, and
\begin{equation*}\label{pro4}
 \max_{x\in[a_{k+1},a_k]} |y(x)| \geq c_0(k+1)^{-\alpha}\left[\log{(k+1)}\right]^l \quad \mbox{for all $k\geq k_0$, $c_0>0$},
\end{equation*}
where $y(x)=p(x)\sin(q(x))$ and the function $p(x)$ and $l\in\eN$ satisfy (\ref{apsp}).
                \item[\rm{(ii)}]
There exists $\varepsilon_0>0$ and a function $k:(0,\varepsilon_0)\to\eN$ such that
\begin{equation}\label{kaeps}
\frac{1}{\pi}\left(\frac{\varepsilon}{\pi C_2} \right)^{-\frac{1}{2}}\leq k(\varepsilon)\leq \frac{2}{\pi}\left(\frac{\varepsilon}{\pi C_2} \right)^{-\frac{1}{2}}.
\end{equation}
In particular,
$$\frac{C_1}{\pi}(k+1)^{-2}\leq a_k-a_{k+1}\leq \varepsilon ,$$
for all $k\geq k(\varepsilon)$ and $\varepsilon\in (0,\varepsilon_0)$.
\end{itemize}
\end{prop}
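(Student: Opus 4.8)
The plan is to re-run the proof of Proposition~2 in \cite{cswavy}, upgrading each estimate that involves the amplitude by the extra factor $[\log(x^{-1})]^l$ furnished by Proposition~\ref{druga_mod}. Everything rests on that proposition: on a small interval $(0,\delta_0]$ the function $q$ is a strictly decreasing $C^1$ bijection onto $[m_0,\infty)$, its inverse satisfies $q^{-1}(t)\simeq_1 t^{-1}$ as $t\to\infty$ together with $C_1 t^{-2}(t-s)\le q^{-1}(s)-q^{-1}(t)\le C_2 s^{-2}(t-s)$ for $m_0\le s<t$, and $C_1 x^{\alpha}[\log(x^{-1})]^l\le p(x)\le C_2 x^{\alpha}[\log(x^{-1})]^l$ near $0$.

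For part~(i): fix $k_0$ so large that $k_0\pi\ge m_0$ and that all ``for small $x$'' statements below hold on $(0,a_{k_0}]$. For $k\ge k_0$ the point $a_k=q^{-1}(k\pi)$ lies in $(0,\delta_0]$ and $q(a_k)=k\pi$, hence $y(a_k)=p(a_k)\sin(k\pi)=0$; since $q^{-1}$ is decreasing and $k\pi<t_0+k\pi<(k+1)\pi$ (as $t_0\in(0,\pi)$) we get $a_{k+1}<s_k<a_k$; since $q^{-1}$ is decreasing with $q^{-1}(t)\to 0$ as $t\to\infty$ we get $a_k\searrow 0$; and $q^{-1}(t)\simeq_1 t^{-1}$ gives $a_k\simeq k^{-1}$. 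For the oscillation bound, on $[a_{k+1},a_k]$ the continuous function $q$ assumes every value of $[k\pi,(k+1)\pi]$, so there is $x_k^{\ast}\in(a_{k+1},a_k)$ with $q(x_k^{\ast})=k\pi+\pi/2$, whence
\[
\max_{x\in[a_{k+1},a_k]}|y(x)|\ \ge\ |y(x_k^{\ast})|\ =\ p(x_k^{\ast})\ \ge\ C_1\,(x_k^{\ast})^{\alpha}\big[\log((x_k^{\ast})^{-1})\big]^l .
\]
Since $0<\alpha\le 1$ and $l\in\eN$, the function $x\mapsto x^{\alpha}[\log(x^{-1})]^l$ is increasing near $0$: its logarithmic derivative equals $\big(\alpha\log(x^{-1})-l\big)/\big(x\log(x^{-1})\big)$, which is positive once $\log(x^{-1})>l/\alpha$. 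Hence from $x_k^{\ast}>a_{k+1}$ we obtain $(x_k^{\ast})^{\alpha}[\log((x_k^{\ast})^{-1})]^l\ge a_{k+1}^{\alpha}[\log(a_{k+1}^{-1})]^l$, and using $a_{k+1}\simeq(k+1)^{-1}$ (so $\log(a_{k+1}^{-1})=\log(k+1)+O(1)\ge\frac12\log(k+1)$ for $k$ large) this is $\ge c_0'\,(k+1)^{-\alpha}[\log(k+1)]^l$; this gives the claimed bound with $c_0=C_1 c_0'$.

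For part~(ii): applying the two-sided estimate for $q^{-1}$ with $s=k\pi$, $t=(k+1)\pi$ (so $t-s=\pi$) yields
\[
\frac{C_1}{\pi(k+1)^2}\ \le\ a_k-a_{k+1}\ \le\ \frac{C_2}{\pi k^2},
\]
the left inequality being exactly the stated lower bound. Now $\tfrac{C_2}{\pi k^2}\le\varepsilon$ precisely when $k\ge\sqrt{C_2/(\pi\varepsilon)}=\tfrac1\pi\big(\varepsilon/(\pi C_2)\big)^{-1/2}$, so we set $k(\varepsilon):=\big\lceil\tfrac1\pi(\varepsilon/(\pi C_2))^{-1/2}\big\rceil$. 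This is an integer-valued, nonincreasing function tending to $\infty$ as $\varepsilon\to 0^{+}$; for $\varepsilon$ small enough the quantity inside the ceiling is $\ge 1$, hence $k(\varepsilon)\le\tfrac2\pi(\varepsilon/(\pi C_2))^{-1/2}$, which together with the obvious lower bound $k(\varepsilon)\ge\tfrac1\pi(\varepsilon/(\pi C_2))^{-1/2}$ is~(\ref{kaeps}); and for every $k\ge k(\varepsilon)$ we have $a_k-a_{k+1}\le\tfrac{C_2}{\pi k^2}\le\varepsilon$. This finishes the proof.

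I do not expect a genuine obstacle here: the statement is a direct adaptation of \cite[Proposition~2]{cswavy} with the logarithmic factor carried along. The only step needing a little care is transporting the amplitude lower bound from the grid point $a_{k+1}$ to the peak location $x_k^{\ast}$, which uses monotonicity of $x^{\alpha}[\log(x^{-1})]^l$ near $0$; this monotonicity survives the logarithmic factor precisely because $\alpha\log(x^{-1})$ eventually dominates the constant $l$, and one must also note the harmless estimate $[\log(a_{k+1}^{-1})]^l\ge\frac12[\log(k+1)]^l$, valid since $a_{k+1}^{-1}\simeq k+1$.
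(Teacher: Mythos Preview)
Your proof is correct and follows exactly the route the paper intends: the paper itself gives no argument here, stating only that the proof is ``analogous as in \cite{cswavy}'', and what you have written is precisely that analogous argument with the logarithmic factor from Proposition~\ref{druga_mod} carried through. The one place where the adaptation needs genuine care---the monotonicity of $x\mapsto x^{\alpha}[\log(x^{-1})]^{l}$ near $0$ so that the amplitude lower bound can be pushed from $x_k^{\ast}$ down to $a_{k+1}$---you have handled correctly.
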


Proofs of Propositions \ref{druga_mod} and \ref{treca_mod} are analogous as in \cite{cswavy}.

\begin{proof}[Proof of Theorem \ref{BDchirp_mod}]
We have to check that assumptions (\ref{left}) and (\ref{right}) are satisfied.
By Proposition \ref{treca_mod} we have
\begin{align*}
\sum_{k\geq k(\varepsilon)} \max_{x\in[a_{k+1},a_k]} |y(x)|(a_k-a_{k+1}) &\geq \frac{c_0 C_1}{\pi}\sum_{k=k(\varepsilon)}^\infty (k+1)^{-\alpha-2}\left[\log (k+1)\right]^l \\
&\geq c\sum_{k=k(\varepsilon)+1}^\infty k^{-\alpha-2}\left[\log k\right]^l=ca,
\end{align*}
where the series $ a=\sum_{k=k(\varepsilon)+1}^\infty k^{-\alpha-2}\left[\log k\right]^l$ is convergent. Then, using the integral test for convergence and (\ref{kaeps}), we obtain that
\begin{align*}
ca &\geq \int_{k=k(\e)+1}^\infty k^{-\alpha-2}\left[\log k\right]^l \geq c_1(\frac{1}{k(\e)+1})^{\alpha+1}\left[\log (k(\e)+1)\right]^l \\
&\geq \frac{c_1 }{2}(\frac{1}{k(\e)})^{\alpha+1}\left[\log k(\e)\right]^l \geq c_2\e^{2-\left(2-\frac{\alpha+1}{2}\right)}\left[\log\left(\e^{-1}\right)\right]^l,\nonumber
\end{align*}
for all $\varepsilon \in (0,\varepsilon_0)$. Using Proposition \ref{druga_mod} it follows that
\begin{align*}
|y'(x)|=|p'(x)\sin(q(x))+p(x)q'(x)\cos(q(x))| &\leq c_3 x^{\alpha-1}\left[\log\left(x^{-1}\right)\right]^l \\
&\leq c_4 x^{\alpha-2},
\end{align*}
which holds near $x=0^+$. By Proposition \ref{treca_mod} we conclude that
\begin{align*}
a_{k(\e)} \sup_{x\in(0,a_{k(\e)}]} |y(x)| &+ \e \int_{a_{k(\e)}}^{a_{k_0}} |y'(x)|dx \\
&\leq c_5\e^{\frac{\alpha+1}{2}}\left[\log\left(\e^{-1}\right)\right]^l+\e c_4[a_{k_0}^{\alpha-1}+
a_{k(\e)}^{\alpha-1}] \\
&\leq c_6\e^{2-\left(2-\frac{\alpha+1}{2}\right)}\left[\log\left(\e^{-1}\right)\right]^l ,
\end{align*}
for all $\varepsilon \in(0,\varepsilon_0)$.

Finally, we apply Theorem \ref{mersat_mod}, where $s=2-\frac{\alpha+1}{2}$.
\end{proof}

The last part of this section is devoted to proving the modified variant of Theorem \ref{tm_ffam}. More precisely, we will prove the modified variant of the original result, \cite[Theorem 5]{zuzu}. To prove this variant, Theorem \ref{ffa} below, we proceed our presentation as in \cite{zuzu}, by first stating and proving where necessary, some auxiliary definitions and results.

We define a spiral in the plane as the graph $\C$ of a function
$r=f(\f)$, $\f\ge\f_1$, in polar coordinates,  
where 
\bgeqn\label{cond}
\left\{
\begin{array}{ll}
\mbox{$f\:[\f_1,\ty)\to(0,\ty)$ is such that  $f(\f)\to0$ as $\f\to\ty$,}&\\
\mbox{$f$ is {\it radially decreasing} (ie, for any fixed $\f\ge\f_1$}&\\
\mbox{the function $\eN\ni k \mapsto f(\f+2k\pi)$
is decreasing)}&\\
\end{array}
\right.
\endeqn

Let $\C$ be a spiral defined by $r=f(\f)$, $\f\ge\f_1$.
We denote a subset of the spiral $\C$ corresponding to angles in the interval $(\f_0,\f_2)$ by 
$\C(\f_0,\f_2)$, more precisely, 
\bgeq
\C(\f_0,\f_2):=\{(r,\f)\in\C\:\f\in(\f_0,\f_2)\}.
\endeq

Let $A$ be a bounded set in $\eR^N$, and let the radial distance function $d_{rad}(x,A)$, be defined as the Euclidean distance from $x$ to the set $A\cap\{tx\:t\ge0\}$, provided the intersection is nonempty, and $\ty$ otherwise. Now the radial $\e$-neighbourhood around $A$ is defined as the set $A_{\e,rad}:=\{y\in\eR^N\:d_{rad}(y,A)<\e\}$.

Using radial $\e$-neighbourhood we define \emph{radial s-dimensional lower and upper Minkowski content} of set $A$, analogously as in Section \ref{subsec_box_dim}, denoted by $\M_*^s(A,rad)$ and $\M^{*s}(A,rad)$, respectively. Also, analogously we define \emph{radial lower and radial upper box dimension} of $A$, denoted by $\underline\dim_B(A,rad)$ and $\ov\dim_B(A,rad)$, respectively. If both quantities coincide, the common value is denoted by $\dim_B(A,rad)$,
and we call it {\it radial box dimension} of $A$. For a general definition of directional box dimensions in $\eR^2$, see Tricot \cite[pp.\ 248--249]{tricot}.
Since $A_{\e,rad}\stq A_\e$, it is clear that 
\bgeq\label{dimrad}
\underline\dim_B(A,rad)\le\underline\dim_BA,\q
\ov\dim_B(A,rad)\le\ov\dim_BA.
\endeq

We define (radial) $\e$-{\it nucleus} of the spiral $\C$ as the 
radial $\e$-neighbourhood around  $\C(\f_2(\e),\ty)\st\C$, that is,
\bgeq\label{N}
N(\C,\e):=\C(\f_2(\e),\ty)_{\e,rad},
\endeq
where by $\f_2(\e)$
we denote the smallest angle such that for all $\psi\ge\f_2(\e)$  we have  $f(\psi)-f(\psi+2\pi)\le2\e$, more precisely,
\bgeq\label{fi2}
\f_2(\e):=\inf\{\f\ge\f_1\:\forall\psi\ge\f,\,\,f(\psi)-f(\psi+2\pi)\le2\e\}.
\endeq

The set $T(\C,\e)$ obtained as the radial $\e$-neighbourhood around the arc $\C(\f_1,\f_2(\e))$, that is,
\bgeq\label{T}
T(\C,\e):=\C(\f_1,\f_2(\e))_{\e,rad},
\endeq 
is called (radial) $\e$-{\it tail} of the spiral $\C$.
The notions of nucleus and tail of a spiral are introduced by Tricot \cite[pp.\ 121, 122]{tricot}.

We consider lower nucleus and lower tail  $s$-dimensional Minkowski contents of $\C$ defined by
\bgeq\label{nt}
\M_*^s(\C,n):=\liminf_{\e\to0}\frac{|N(\C,\e)|}{\e^{2-s}},\q
\M_*^s(\C,t):=\liminf_{\e\to0}\frac{|T(\C,\e)|}{\e^{2-s}}
\endeq
respectively, for $s\ge0$. Analogously for the upper nucleus and upper tail Minkowski contents. 
It is clear that
\bgeq\label{gnc}
\M^{*s}(\C,rad)\le\M^{*s}(\C,n)+\M^{*s}(\C,t).
\endeq
Indeed, we can express radial $\e$-neighbourhood around $\C$ as
 $\C_{\e,rad}=N(\C,\e)\cup T(\C,\e)\cup S(\e)$, where $S(\e):=\{(r,\f)\in\C_{\e,rad}\:\f=\f_2(\e)\}$
 is of the $2$-dimensional Lebesgue measure zero. Hence
\[
\M^{*s}(\C,rad)\le\limsup_{\e\to0}\frac{|N(\C,\e)|}{\e^{2-s}}+\limsup_{\e\to0}\frac{|T(\C,\e)|}
{\e^{2-s}} .
\]

First we prove the modified variant of \cite[Theorem 1]{zuzu}.

\begin{theorem}\label{F}
Let $f\:[\f_1,\ty)\to(0,\ty)$, where $\f_1>e$, be a measurable, radially decreasing function, see
(\ref{cond}). Let $\a\in(0,1)$ and $l\in\eN$ such that 
for some positive numbers $\underline m$ and $\ov m$ we have 
\bgeq\label{m}
\underline m\,\f^{-\a}\left[\log\f\right]^l\le f(\f)\le\ov m\,\f^{-\a}\left[\log\f\right]^l
\endeq
for all $\f\ge\f_1>0$. Assume that there exist positive constants $\underline a$ and $\ov a$ such that for all $\f\ge\f_1$,
\bgeq\label{a}
\underline a\,\f^{-\a-1}\left[\log\f\right]^l\le f(\f)-f(\f+2\pi)\le \overline a\,\f^{-\a-1}\left[\log\f\right]^l.
\endeq

Let
$\C$ be the graph of $r=f(\f)$ in polar coordinates. Then
\bgeqn
&d:=\dim_B(\C,rad)=\frac2{1+\a},&\\ 
&\M^{*d}(\C,rad)=+\infty.&
\endeqn
\end{theorem}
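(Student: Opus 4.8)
The plan is to imitate the nucleus--tail decomposition used in the proof of \cite[Theorem 1]{zuzu}, carrying the logarithmic factor $[\log\f]^l$ through every estimate. Put $d=2/(1+\a)$. The whole argument reduces to the single two-sided estimate
\[
|\C_{\e,rad}|\simeq_1\e^{2-d}\,[\log(\e^{-1})]^{2l/(1+\a)}\qquad\text{as}\quad\e\to0 .
\]
Granting this, dividing by $\e^{2-s}$ and letting $\e\to0$ gives $\M^{*s}(\C,rad)=0$ for every $s>d$ and $\M_*^{s}(\C,rad)=+\ty$ for every $s<d$, so that $\ov\dim_B(\C,rad)\le d\le\underline\dim_B(\C,rad)$ and hence $\dim_B(\C,rad)=d$; and the case $s=d$ gives $\M^{*d}(\C,rad)=\limsup_{\e\to0}\e^{-(2-d)}|\C_{\e,rad}|=+\ty$, precisely because the surviving power of the logarithm diverges. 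Note that only an upper bound on $|\C_{\e,rad}|$ and a lower bound on the nucleus are actually needed, so the tail will only be estimated from above.

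The first step is to pin down $\f_2(\e)$ from (\ref{fi2}). The map $\psi\mapsto\psi^{-\a-1}[\log\psi]^l$ is eventually decreasing, so the lower bound in (\ref{a}) shows $f(\psi)-f(\psi+2\pi)>2\e$ on the whole set $\{\psi\ge\f_1:\underline a\,\psi^{-\a-1}[\log\psi]^l>2\e\}$, which for small $\e$ is an interval $[\f_1,\f_{**})$, whence $\f_2(\e)\ge\f_{**}$; and the upper bound in (\ref{a}) shows, for $\f$ past the maximum of that map, that $f(\psi)-f(\psi+2\pi)\le2\e$ for all $\psi\ge\f$ as soon as $\overline a\,\f^{-\a-1}[\log\f]^l\le2\e$, whence $\f_2(\e)\le\f^{*}$, the large root of $\overline a\,\f^{-\a-1}[\log\f]^l=2\e$. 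Solving the relation $\f^{\a+1}\simeq_1\e^{-1}[\log\f]^l$ (the iterated logarithm is of lower order) gives $\f^{*},\f_{**}\simeq_1\e^{-1/(1+\a)}[\log(\e^{-1})]^{l/(1+\a)}$, hence
\[
\f_2(\e)\simeq_1\e^{-1/(1+\a)}[\log(\e^{-1})]^{l/(1+\a)},\qquad\log\f_2(\e)\simeq_1\log(\e^{-1}),
\]
and consequently $f(\f_2(\e))\simeq_1\f_2(\e)^{-\a}[\log\f_2(\e)]^l\simeq_1\e^{\a/(1+\a)}[\log(\e^{-1})]^{l/(1+\a)}$ by (\ref{m}).

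The second step handles the nucleus $N(\C,\e)$ of (\ref{N}). Along a fixed direction $\f$, the radial section of $\C(\f_2(\e),\ty)$ is a decreasing sequence of radii whose top value is comparable to $f(\f_2(\e))$ uniformly in $\f$ (by (\ref{m}) and $\f_2(\e)+2\pi\simeq_1\f_2(\e)$), and whose consecutive gaps are at most $2\e$ by the very definition of $\f_2(\e)$; therefore the radial $\e$-neighbourhood of that section is, up to a set of measure zero, the interval $(0,f(\f_2(\e))+\e)$. Integrating $r\,dr$ over all directions gives
\[
|N(\C,\e)|\simeq_1 f(\f_2(\e))^2\simeq_1\e^{2\a/(1+\a)}[\log(\e^{-1})]^{2l/(1+\a)}=\e^{2-d}[\log(\e^{-1})]^{2l/(1+\a)},
\]
where the surviving log exponent is $2l-2\a l/(1+\a)=2l/(1+\a)$. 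For the tail $T(\C,\e)$ of (\ref{T}) the set is contained in the union, over the finitely many turns with angle $\psi\in[\f_1,\f_2(\e))$, of the planar $\e$-annuli around the individual turns, the one at angle $\psi$ having area at most $2\pi\cdot2\e\cdot\max f\lesssim\e\,\psi^{-\a}[\log\psi]^l$ by (\ref{m}); comparing the resulting sum with an integral and using $\a\in(0,1)$ so that the power dominates, we obtain
\[
|T(\C,\e)|\lesssim\e\int_{\f_1}^{\f_2(\e)}\psi^{-\a}[\log\psi]^l\,d\psi\simeq_1\e\,\f_2(\e)^{1-\a}[\log\f_2(\e)]^l\simeq_1\e^{2-d}[\log(\e^{-1})]^{2l/(1+\a)} .
\]
Since $\C_{\e,rad}=N(\C,\e)\cup T(\C,\e)\cup S(\e)$ with $|S(\e)|=0$ (the decomposition of $\C_{\e,rad}$ used after (\ref{gnc})), we get $|N(\C,\e)|\le|\C_{\e,rad}|\le|N(\C,\e)|+|T(\C,\e)|$, which pins $|\C_{\e,rad}|$ to the claimed order and completes the argument.

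The only genuine obstacle is bookkeeping with the logarithms: one must solve for $\f_2(\e)$ to the precision of the logarithmic power so that the exponent $2l/(1+\a)$ of $\log(\e^{-1})$ comes out exactly, and one needs the elementary asymptotics $\int_{\f_1}^{\Phi}\psi^{-\a}[\log\psi]^l\,d\psi\sim\frac1{1-\a}\Phi^{1-\a}[\log\Phi]^l$ as $\Phi\to\ty$ together with its discrete (sum-over-turns) counterpart. Every other step is verbatim the $l=0$ case treated in \cite{zuzu}, which is why the spiral, nucleus and tail formalism is quoted rather than reproved.
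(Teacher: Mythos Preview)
Your argument is correct and follows the same nucleus--tail decomposition as the paper, but you track the logarithmic factor more precisely. The paper does not solve for $\f_2(\e)$ to leading log order; instead it brackets $\f_2(\e)$ crudely between $\underline\f_2(\e)=(2\e/\underline a)^{-1/(1+\a)}$ and $\ov\f_2(\e)=(2\e/\ov a)^{-1/(1+\a-\d)}$, where $\d=\d(\e)$ is chosen so that $[\log\f]^l<\f^{\d}$ for $\f\ge\f_2(\e)$. This yields the looser bounds $|N(\C,\e)|\lesssim\e^{2\a/(1+\a)}[\log\e]^{2l}$ and $|T(\C,\e)|\lesssim\e^{2(\a-\d)/(1+\a-\d)}$, and the lower bounds $|N(\C,\e)|\gtrsim\e^{2\a/(1+\a-\d)}[\log\e]^{2l}$, $|T(\C,\e)|\gtrsim\e^{2\a/(1+\a)}$, from which the dimension is read off by letting $\d\to0$ as $\e\to0$.

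Your route, solving $\f^{\a+1}\simeq\e^{-1}[\log\f]^l$ to obtain $\f_2(\e)\simeq_1\e^{-1/(1+\a)}[\log(\e^{-1})]^{l/(1+\a)}$, gives the sharp two-sided estimate $|\C_{\e,rad}|\simeq_1\e^{2-d}[\log(\e^{-1})]^{2l/(1+\a)}$. This is both cleaner and stronger: the paper's $\d$-trick makes the step $\M^{*d}(\C,rad)=+\infty$ somewhat opaque (its displayed line $\M^{*d}(\C,rad)\le+\infty+\infty$ is of course trivial, and one has to unwind the behaviour of $\e^{2\a/(1+\a-\d(\e))}[\log\e]^{2l}$ to see divergence), whereas in your version the surviving factor $[\log(\e^{-1})]^{2l/(1+\a)}$ immediately gives $\M^{*d}(\C,rad)=+\infty$ and even $\M_*^{d}(\C,rad)=+\infty$. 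One minor caveat: the monotonicity of $\psi\mapsto\psi^{-\a-1}[\log\psi]^l$ that you invoke for localising $\f_2(\e)$ requires $\psi>e^{l/(\a+1)}$, not just $\psi>e$; but since you only use it for small $\e$, where all relevant $\psi$ are large, this is harmless.
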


\begin{proof} 
We first obtain the upper bound of the area of the $\e$-nucleus of $\C$. Note that
inequality $f(\f)-f(\f+2\pi)> 2\e$
is satisfied when $\underline a\,\f^{-\a-1}\left[\log\f\right]^l > \underline a\,\f^{-\a-1} > 2\e$, that is, for 
$\f<\underline \f_2(\e)$,
where $\underline\f_2(\e):=\left(\frac{2\e}{\underline a}\right)^{-1/(1+\a)}$. 
From the definition of $\f_2(\e)$, see (\ref{fi2}), we have
\bgeq\label{uf2}
\f_2(\e)\ge\underline\f_2(\e),
\endeq
therefore,
\bgeq
|N(\C,\e)|\le\pi(\sup_{[\underline\f_2(\e),\underline\f_2(\e)+2\pi]} f+\e)^2\le
\pi\left(\ov m\,\underline\f_2(\e)^{-\a}\left[\log\underline\f_2(\e)\right]^l+\e\right)^2.
\endeq 
We see that
\bgeq
|N(\C,\e)|\le\ov c_1\cdot\e^{2\a/(1+\a)}\left[\log\e\right]^{2l},
\endeq
where $\ov c_1>0$.

Now we estimate the area of the $\e$-tail of $\C$ from above.
The inequality $f(\f)-f(\f+2\pi)< 2\e$ is satisfied when $\ov a\,\f^{-\a-1}\left[\log\f\right]^l< 2\e$. Hence, $f(\f)-f(\f+2\pi)< 2\e$ is satisfied for $\f>\ov
\f_2(\e)$,
where 
\bgeq\label{ovf2}
\ov\f_2(\e):=\left(\frac{2\e}{\ov a}\right)^{-1/(1+\a-\d)},
\endeq
for $\d=\d(\e):=\inf_{\d>0}\{\d : [\log\f]^l<\f^{\d}, \forall \f\geq\f_2(\e)\}$. Notice that $\ov a\,\f^{-\a-1}\left[\log\f\right]^l\leq\ov a\,\f^{-(\a-\d)-1}$, for all $\f\geq\f_2(\e)$. 
Therefore $\f_2(\e)\le\ov\f_2(\e)$, and from this
we have that
\bgeqn
|T(\C,\e)|&\le&2\int_{\f_1}^{\ov\f_2(\e)}[(f(\f)+\e)^2-(f(\f)-\e)^2]\,d\f \nonumber\\
&=&2\e\int_{\f_1}^{\ov\f_2(\e)}f(\f)\,d\f\le 2\e\ov m\int_{\f_1}^{\ov\f_2(\e)}\f^{-\a}\left[\log\f\right]^ld\f \nonumber\\
&\le& 2\e\ov m\int_{\f_1}^{\ov\f_2(\e)}\f^{-(\a-\d)}d\f \nonumber\\
&=&\frac{2\e\ov m}{1-\a}(\ov\f_2(\e)^{1-(\a-\d)}-\f_1^{1-(\a-\d)})\le\ov c_2\cdot\e^{2(\a-\d)/(1+\a-\d)}, \nonumber
\endeqn
where
$\ov c_2>0$.
Notice that $\d\to 0$ as $\e\to 0$.

Defining $d:=2/(1+\a)$ we have that, see (\ref{gnc}),
\bgeq
\M^{*d}(\C,rad)\le \M^{*d}(\C,n)+\M^{*d}(\C,t)=+\infty+\infty=+\infty .
\endeq

For every $d'>d$ it holds that $\M^{*d'}(\C,n)=0$, and we can take $\e>0$ sufficiently small, such that $\d>0$ be sufficiently small, so that $\M^{*d'}(\C,t)=0$. Hence, we conclude that $\dim_B(\C,rad)\leq d$.

To obtain a lower bound of the area of $\e$-nucleus of $\C$, 
we show that
\bgeq\label{NB}
N(\C,\e)\supset B_r(0),\q r:=\inf_{\f\in[\ov\f_2(\e),\ov\f_2(\e)+2\pi]} f(\f) ,
\endeq
analogously as in the proof of \cite[Theorem 1]{zuzu}.
Using (\ref{NB}) and (\ref{m}) we obtain
\bgeq\label{NN}
|N(\C,\e)|\ge \pi r^2\ge\pi\left(\underline m(\ov\f_2(\e)+2\pi)^{-\a}\left[\log(\ov\f_2(\e))\right]^l\right)^{2},
\endeq
hence,
\bgeq
|N(\C,\e)|\ge\underline c_1\cdot\e^{2\a/(1+\a-\d)}\left[\log\e\right]^{2l},
\endeq
where $\underline c_1>0$.

Similarly as above we obtain that
\bgeq
|T(\C,\e)|\ge2\e\int_{\f_1}^{\underline\f_2(\e)}f(\f)\,d\f
\ge \underline c_2\cdot\e^{2\a/(1+\a)},
\endeq
where $\underline c_2>0$, provided $\e$ is sufficiently small.

As $\underline\f_2(\e)\leq\overline\f_2(\e)$, we conclude that
\bgeq
\M_*^{d}(\C,rad)\ge\liminf_{\e\to0}\frac{\underline c_1\cdot\e^{2\a/(1+\a-\d)}\left[\log\e\right]^{2l} +\underline c_2\cdot\e^{2\a/(1+\a)}}{\e^{2-d}} = \underline c_2 .
\endeq
For every $d'<d$, we can take $\e>0$ sufficiently small, such that $\d>0$ be sufficiently small, so it holds that $\M_*^{d'}(\C,rad)=+\infty$. Hence, we conclude that $\dim_B(\C,rad)\geq d$.
\end{proof}

The following theorem is a marginally modified variant of \cite[Theorem 4]{zuzu}. The only difference is in adding the log term $\left[\log\e\right]^l$.

\begin{theorem}\label{FI} 
Let $\C$ be a spiral of focus type defined by $r=f(\f)$, $f\:[\f_1,\ty)\to(0,\ty)$,
such that $f(\f)$ is decreasing, and $f(\f)\to0$.
Let there exist $\e_0$ such that the functional inequality
\bgeq\label{fi}
f\left(
\f+\frac{\e}{f(\f)}+\frac{\e}{f(\f)-\e}
\right)
>
f(\f)-\e.
\endeq
holds for all $\e\in(0,\e_0)$ and $\f\in(\f_1,\f_2(\e))$, where $\f_2(\e)$ is defined by (\ref{fi2}).
Assume also that there exist positive constants $\underline C$, $\ov C$ and $q<1$
such that $\underline C\,\e^{q}\le f(\f_2(\e))\le\ov C\,\e^{1-\frac d2}\left[\log\e\right]^l$, where $d:=\ov\dim_B(\C,rad)$ and $l\in\eN$.
Then
\bgeq
\ov\dim_B\C=\ov\dim_B(\C,rad).
\endeq
\end{theorem}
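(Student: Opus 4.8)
The plan is to reduce everything to an upper estimate of the Euclidean $\e$-neighbourhood $\C_\e$ in terms of the radial one. Indeed, by (\ref{dimrad}) we already have $\ov\dim_B(\C,rad)\le\ov\dim_B\C$, so it suffices to prove $\ov\dim_B\C\le d$, i.e.\ $\M^{*s}(\C)=0$ for every $s>d$. I would follow the proof of \cite[Theorem 4]{zuzu} verbatim, the only new ingredient being that the extra factor $[\log\e]^l$ in the hypothesis on $f(\f_2(\e))$ has to be carried along. Since $f$ is decreasing, $\C(\f_2(\e),\ty)$ lies in the disk $\bar B(0,f(\f_2(\e)))$; writing $A_\e$ for the Euclidean $\e$-neighbourhood of the tail arc $\C(\f_1,\f_2(\e))$ (with $\f_2(\e)$ from (\ref{fi2})), this gives the splitting $\C_\e\subseteq A_\e\cup\bar B(0,f(\f_2(\e))+\e)$.

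The central disk is handled first: the two bounds $\underline C\,\e^{q}\le f(\f_2(\e))\le\ov C\,\e^{1-d/2}[\log\e]^l$ force $d\ge 2(1-q)>0$, so for small $\e$ one has $f(\f_2(\e))+\e\le c\,\e^{1-d/2}[\log\e]^l$, whence $|\bar B(0,f(\f_2(\e))+\e)|\le\pi(f(\f_2(\e))+\e)^2\le c_1\,\e^{2-d}[\log\e]^{2l}$. The condition $q<1$ is kept in reserve for the tail: it ensures $f(\f)\ge f(\f_2(\e))\ge\underline C\,\e^{q}\gg\e$ uniformly for $\f\in(\f_1,\f_2(\e))$, which forces the angular windows appearing below to shrink to zero.

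For $A_\e$: a Euclidean $\e$-ball $B(P_\f,\e)$ about the spiral point $P_\f=(f(\f),\f)$, $\f\in(\f_1,\f_2(\e))$, occupies radii in $(f(\f)-\e,f(\f)+\e)$ and angles within $\arcsin(\e/f(\f))\le\e/f(\f)+\e/(f(\f)-\e)=:\delta_\f$ of $\f$ (the estimate being valid precisely because $\e<f(\f)$). The functional inequality (\ref{fi}) is exactly the statement $f(\f)-f(\f+\delta_\f)<\e$, i.e.\ that the spiral radius varies by at most $O(\e)$ across the angular window swept by $B(P_\f,\e)$; hence any $(\rho,\theta)\in B(P_\f,\e)$ obeys $|\rho-f(\theta)|\le|\rho-f(\f)|+|f(\f)-f(\theta)|<C\e$, so $B(P_\f,\e)$ sits inside the radial $C\e$-neighbourhood of $\C$. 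Carrying this out along the whole tail --- as in \cite{zuzu}, up to a bounded-area correction near the outer end $\f_1$ and finitely many innermost turns swallowed by $\bar B(0,f(\f_2(\e))+\e)$ --- yields $A_\e\subseteq\C_{C\e,rad}\cup\bar B(0,f(\f_2(\e))+\e)$, so $|A_\e|\le|\C_{C\e,rad}|+c_1\,\e^{2-d}[\log\e]^{2l}$ after enlarging $c_1$.

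Combining, $|\C_\e|\le|\C_{C\e,rad}|+2c_1\,\e^{2-d}[\log\e]^{2l}$ for small $\e$. Fixing $s>d$ and dividing by $\e^{2-s}$: the first term is $C^{2-s}|\C_{C\e,rad}|/(C\e)^{2-s}$, whose $\limsup$ as $\e\to0$ equals $C^{2-s}\M^{*s}(\C,rad)=0$ since $s>d=\ov\dim_B(\C,rad)$, and the second is $2c_1\,\e^{s-d}[\log\e]^{2l}\to0$; hence $\M^{*s}(\C)=0$ for every $s>d$, i.e.\ $\ov\dim_B\C\le d$, and with (\ref{dimrad}) this finishes the proof. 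The part I expect to be the main obstacle is the tail inclusion $A_\e\subseteq\C_{C\e,rad}\cup\bar B(0,\cdot)$: the uniform control over the whole tail that a Euclidean $\e$-ball riding along the spiral never leaves a radial $C\e$-tube, and the delicate bookkeeping of the innermost turns where $\delta_\f$ ceases to be small. This is the geometric core of \cite[Theorem 4]{zuzu}; the present modification leaves it untouched, because the factor $[\log\e]^l$ enters only through the estimate of the central disk and survives just as a bounded power of $\log\e$ that is killed the moment one passes to an exponent $s>d$.
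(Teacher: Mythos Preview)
Your proposal is correct and matches the paper's own treatment: the paper omits the proof entirely, stating that it is ``almost completely analogous to the proof of \cite[Theorem 4]{zuzu}'' with the only difference being ``the treatment of the log term in the condition $f(\f_2(\e))\le\ov C\,\e^{1-\frac d2}[\log\e]^l$.'' You have identified exactly this --- that the geometric core (the tail inclusion via the functional inequality) is unchanged from \cite{zuzu}, while the log factor enters only through the central-disk estimate and is absorbed once one passes to any exponent $s>d$.
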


We omit the proof of Theorem \ref{FI}, as it is almost completely analogous to the proof of \cite[Theorem 4]{zuzu}. Only one small difference occurs in the treatment of the log term in the condition $f(\f_2(\e))\le\ov C\,\e^{1-\frac d2}\left[\log\e\right]^l$.

The following excision property of Minkowski contents will enable us to handle the condition for $\f_1$ to be sufficiently large in Theorem~\ref{ffa}. We completely omit the proof, as it is already proved in \cite{zuzu}.

\begin{lemma}\label{excision} {\rm(Excision property for simple smooth curves)}
Let $\C$ be a simple smooth curve in $\eR^2$, that is, $\C$ is the graph of continuously
differentiable injection $h\:[\f_1,\ty)\to\eR^2$. Assume that $\underline\dim_B\C>1$. Let 
$\ov\f_1>\f_1$ be given and $\C_1:=h(\ov\f_1,\ty)$.  Then 
\bgeqn
&\underline d:=\underline\dim_B\C_1=\underline\dim_B\C,\q\ov d:=\ov\dim_B\C_1=\ov\dim_B\C,&\\
&\M_*^{\underline d}(\C_1)=
\M_*^{\underline d}(\C),\q
\M^{*\ov d}(\C_1)=\M^{*\ov d}(\C).&
\endeqn
Analogous claim holds for radial box dimensions and radial Minkowski contents:
if $\underline\dim_B(\C,rad)>1$, then 
\bgeqn
&\underline \d:=\underline\dim_B(\C_1,rad)=\underline\dim_B(\C,rad),\q\ov \d:=\ov\dim_B(\C_1,rad)=
\ov\dim_B(\C,rad),&\nonumber\\
&\M_*^{\underline \d}(\C_1,rad)=
\M_*^{\underline \d}(\C,rad),\q
\M^{*\ov \d}(\C_1,rad)=\M^{*\ov \d}(\C,rad).&\nonumber
\endeqn
In particular, the conclusions hold for smooth spirals $r=f(\f)$, where $f(\f)$ is a decreasing function tending to $0$ as $\f\to\ty$.
\end{lemma}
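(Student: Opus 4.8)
The plan is to show that replacing $\C$ by $\C_1$ removes only a set whose $\e$-neighbourhood is negligible at every scale $s>1$, and then to let the hypothesis $\underline\dim_B\C>1$ do the rest. Write $K:=h([\f_1,\ov\f_1])$, so that $\C=\C_1\cup K$; since $h$ is a continuously differentiable injection and $[\f_1,\ov\f_1]$ is compact, $K$ is a compact arc of finite length $L$. First I would invoke the classical tube (Minkowski sausage) estimate: a rectifiable curve of length $L$ in $\eR^2$ satisfies $|K_\e|\le 2L\e+\pi\e^2$, hence $|K_\e|\le c\,\e$ for all $\e\in(0,1]$ with $c=c(L)$ (see, e.g., \cite{falc}; this is the quantitative form of the fact, already used in the proof of Proposition~\ref{pr_no_sing}, that a rectifiable curve has box dimension $1$). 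Since $\C_1\st\C$ we have $(\C_1)_\e\st\C_\e$, and since $\C=\C_1\cup K$ every point at distance less than $\e$ from $\C$ is at distance less than $\e$ from $\C_1$ or from $K$, so $\C_\e\st(\C_1)_\e\cup K_\e$. Combining these,
\[
|(\C_1)_\e|\le|\C_\e|\le|(\C_1)_\e|+c\,\e .
\]

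Dividing by $\e^{2-s}$ gives $\e^{-(2-s)}|(\C_1)_\e|\le\e^{-(2-s)}|\C_\e|\le\e^{-(2-s)}|(\C_1)_\e|+c\,\e^{s-1}$, where for every $s>1$ the correction term $c\,\e^{s-1}$ tends to $0$ as $\e\to0$. Passing to $\liminf$ and to $\limsup$ as $\e\to0$ then yields
\[
\M_*^s(\C_1)=\M_*^s(\C)\quad\text{and}\quad\M^{*s}(\C_1)=\M^{*s}(\C)\qquad\text{for every }s>1 .
\]
At this point the hypothesis $\underline\dim_B\C>1$ is used. For $s<\underline\dim_B\C$ one has $\M_*^s(\C)>0$ by the definition of the lower box dimension, so for $s\in(1,\underline\dim_B\C)$ also $\M_*^s(\C_1)=\M_*^s(\C)>0$; since $\M_*^s$ is nonincreasing in $s$, this positivity then holds for all $s<\underline\dim_B\C$, whence $\underline\dim_B\C_1\ge\underline\dim_B\C$. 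The reverse inequality is immediate from $\C_1\st\C$, so $\underline d:=\underline\dim_B\C_1=\underline\dim_B\C>1$; running the identical argument with $\M^{*s}$ gives $\ov d:=\ov\dim_B\C_1=\ov\dim_B\C$, and $\ov d\ge\underline d>1$. Finally, because $\underline d>1$ and $\ov d>1$, the asserted equalities $\M_*^{\underline d}(\C_1)=\M_*^{\underline d}(\C)$ and $\M^{*\ov d}(\C_1)=\M^{*\ov d}(\C)$ are just the instances $s=\underline d$ and $s=\ov d$ of the two displayed identities above.

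For the radial statement I would repeat the same scheme with radial neighbourhoods. One has $K_{\e,rad}\stq K_\e$, so still $|K_{\e,rad}|\le c\,\e$; moreover, if $x$ lies at radial distance less than $\e$ from $\C$, the point of $\C\cap\{tx\:t\ge0\}$ realising this belongs to $\C_1$ or to $K$, so $\C_{\e,rad}\st(\C_1)_{\e,rad}\cup K_{\e,rad}$, while $(\C_1)_{\e,rad}\stq\C_{\e,rad}$ trivially. The sandwich $|(\C_1)_{\e,rad}|\le|\C_{\e,rad}|\le|(\C_1)_{\e,rad}|+c\,\e$ then gives, exactly as before, that all radial contents coincide for $s>1$, and the hypothesis $\underline\dim_B(\C,rad)>1$ yields the equalities of radial box dimensions and radial Minkowski contents in the same way. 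The closing ``in particular'' is then immediate: a smooth spiral $r=f(\f)$ with $f$ decreasing and $f(\f)\to0$ is a simple smooth curve in the required sense, its parametrisation $\f\mapsto(f(\f)\cos\f,f(\f)\sin\f)$ being a $C^1$ injection, so the lemma applies to it. I do not expect any serious obstacle here; the only points demanding care are that the whole argument must stay at scales $s>1$ --- which is exactly what the dimension hypotheses guarantee --- together with the classical bound $|K_\e|=O(\e)$ for the excised arc and the one-line dichotomy that produces $\C_{\e,rad}\st(\C_1)_{\e,rad}\cup K_{\e,rad}$ in the radial case.
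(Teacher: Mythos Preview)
Your argument is correct and is essentially the standard excision argument for box dimension: remove a rectifiable arc $K$, use the tube bound $|K_\e|=O(\e)$, sandwich $|(\C_1)_\e|\le|\C_\e|\le|(\C_1)_\e|+O(\e)$, and observe that the correction $\e^{s-1}$ vanishes for $s>1$, so all $s$-contents with $s>1$ coincide; the hypothesis $\underline\dim_B\C>1$ then transfers the dimensions and the contents at $s=\underline d,\ov d$. The radial version via $K_{\e,rad}\subseteq K_\e$ and the ray-wise dichotomy is likewise fine.

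As for comparison with the paper: the paper does not give a proof of this lemma at all---it explicitly omits it and refers to \cite{zuzu}, where the result was first established. The argument there is the same tube-estimate excision you wrote down, so your proposal matches the intended proof. One cosmetic remark: when you pass from ``$\M_*^s(\C_1)>0$ for $s\in(1,\underline\dim_B\C)$'' to ``for all $s<\underline\dim_B\C$'', the monotonicity step is correct but you could equally note that $\underline\dim_B\C_1\ge1$ automatically (a curve contains arbitrarily long rectifiable pieces near the accumulation), which makes the interval $(1,\underline\dim_B\C)$ already sufficient for the infimum characterisation; either way the conclusion stands.
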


Finally, here is the modified variant of Theorem \ref{tm_ffam}.

\begin{theorem}\label{ffa} 
Assume in addition to the assumptions of Theorem \ref{F} that the function $f$
is decreasing, of class $C^2$,  and
there exist positive constants $M_1$ and $M_2$ such that for all $\f\ge\f_1$,
\bgeq\label{Mi}
M_1\f^{-\a-1}[\log\f]^l\le|f'(\f)|\le M_2\f^{-\a-1}[\log\f]^l.
\endeq
Then
\bgeq\label{drad}
\dim_B\C=\dim_B(\C,rad)=d,
\endeq
and
\bgeq\label{Mrad}
\M^{*d}(\C)=\M^{*d}(\C,rad)=+\infty,
\endeq
where $d:=\frac2{1+\a}$.
\end{theorem}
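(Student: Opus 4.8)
The plan is to obtain the radial statements from Theorem \ref{F} and then to ``de-radialise'' them via Theorem \ref{FI} and the excision Lemma \ref{excision}. The hypotheses of Theorem \ref{F} are exactly the radial monotonicity of $f$ together with the two-sided bounds (\ref{m}) and (\ref{a}), all among our assumptions, so Theorem \ref{F} applies and gives $\dim_B(\C,rad)=d$ and $\M^{*d}(\C,rad)=+\ty$ with $d=2/(1+\a)$. The Minkowski content assertion (\ref{Mrad}) then follows at once, since $\C_{\e,rad}\stq\C_\e$ forces $|\C_{\e,rad}|\le|\C_\e|$ and hence $\M^{*d}(\C)\ge\M^{*d}(\C,rad)=+\ty$. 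For the box dimension, one half is free from (\ref{dimrad}): $d=\dim_B(\C,rad)\le\underline\dim_B\C\le\ov\dim_B\C$, so the whole problem reduces to proving $\ov\dim_B\C\le d$.

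For that I would apply Theorem \ref{FI} with the value $d:=\ov\dim_B(\C,rad)=2/(1+\a)$ supplied by Theorem \ref{F}. Since $\underline\dim_B\C\ge\underline\dim_B(\C,rad)=d>1$, Lemma \ref{excision} first lets me replace $\C$ by its tail $\C_1$ (angles $\f\ge\ov\f_1$, with $\ov\f_1$ as large as convenient) without changing any box dimension or Minkowski content, so I may assume $\f_1$ is as large as needed. It then remains to check two hypotheses of Theorem \ref{FI}. First, the functional inequality (\ref{fi}): putting $h=\e/f(\f)+\e/(f(\f)-\e)$, the mean value theorem gives $f(\f+h)-f(\f)=f'(\xi)h\ge-|f'(\xi)|\,h$ for some $\xi\in(\f,\f+h)$, so it suffices that $|f'(\xi)|\,h<\e$ for $\f\in(\f_1,\f_2(\e))$ and $\e$ small. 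On that range $f$ is decreasing and $\f_2(\e)\ge\underline\f_2(\e)=(2\e/\underline a)^{-1/(1+\a)}$ by the proof of Theorem \ref{F}, so by (\ref{m}) we get $f(\f)\ge c\,\e^{\a/(1+\a)}[\log(1/\e)]^l\gg\e$; hence $f(\f)-\e\ge\tfrac12 f(\f)$ and $h\le C_0\,\e/f(\f)$, while using (\ref{Mi}), the eventual monotonicity of $t\mapsto t^{-\a-1}[\log t]^l$, and (\ref{m}),
\[
|f'(\xi)|\,h\le\frac{C_0\,M_2\,\f^{-\a-1}[\log\f]^l}{\underline m\,\f^{-\a}[\log\f]^l}\,\e=\frac{C_0 M_2}{\underline m}\,\e\,\f^{-1}<\e
\]
once $\f\ge\f_1$ is large, which has been arranged. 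Second, the sandwich $\underline C\,\e^q\le f(\f_2(\e))\le\ov C\,\e^{1-d/2}[\log\e]^l$: from $\underline\f_2(\e)\le\f_2(\e)\le\ov\f_2(\e)$ (see (\ref{uf2}), (\ref{ovf2})), with $\ov\f_2(\e)=(2\e/\ov a)^{-1/(1+\a-\d)}$ and $\d=\d(\e)\to0$, and (\ref{m}), a routine computation gives $c_1\,\e^q\le f(\f_2(\e))\le c_2\,\e^{\a/(1+\a)}[\log(1/\e)]^l$ for any fixed $q\in(\a/(1+\a),1)$ and all small $\e$; since $1-d/2=\a/(1+\a)$, this is exactly what Theorem \ref{FI} requires, the power of the logarithm being the given $l$. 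Theorem \ref{FI} then yields $\ov\dim_B\C=\ov\dim_B(\C,rad)=d$, and with the free inequality above we conclude $\dim_B\C=d$.

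I expect the main obstacle to be the rigorous verification of (\ref{fi}) near $\f=\f_2(\e)$, where $f(\f)$ is only of order $\e^{\a/(1+\a)}$: there one must control, uniformly in $\e$, both the first-order error in the mean value estimate and the comparison $f(\f)-\e\asymp f(\f)$, and must keep precise track of the logarithmic powers so that the upper estimate for $f(\f_2(\e))$ takes exactly the shape $\ov C\,\e^{1-d/2}[\log\e]^l$ demanded by Theorem \ref{FI}.
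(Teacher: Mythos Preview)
Your proposal is correct and follows essentially the same route as the paper: invoke Theorem~\ref{F} for the radial data, use Lemma~\ref{excision} to push $\f_1$ out, verify the functional inequality (\ref{fi}) of Theorem~\ref{FI} via the mean value theorem together with (\ref{Mi}) and (\ref{m}), check the two-sided bound on $f(\f_2(\e))$ from $\underline\f_2(\e)\le\f_2(\e)\le\ov\f_2(\e)$, and then conclude $\ov\dim_B\C=\ov\dim_B(\C,rad)=d$; the Minkowski content part is immediate from $\C_{\e,rad}\stq\C_\e$. The only cosmetic slip is your lower bound for $f(\f)$ on $(\f_1,\f_2(\e))$: what you actually get from $\f\le\f_2(\e)\le\ov\f_2(\e)$ and (\ref{m}) is $f(\f)\ge f(\f_2(\e))\ge c\,\e^{\a/(1+\a-\d)}[\log(1/\e)]^l$ with $\d=\d(\e)\to0$, not the clean exponent $\a/(1+\a)$; but since any such exponent is $<1$ this still gives $f(\f)\gg\e$ and the argument goes through unchanged.
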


\begin{proof}
(a) From the excision result, see Lemma~\ref{excision}, we can assume without loss of generality that $\f_1$ is sufficiently large, which we need below.
We first check that condition (\ref{fi}) of Theorem~\ref{FI} is fulfilled. By the Lagrange mean value theorem
 for all $\f\in(\f_1,\f_2(\e))$, where $\f_2(\e)$ is defined in (\ref{fi2}), we have that
\begin{align*}
D &:= f(\f)-f\left(
\f+\frac{\e}{f(\f)}+\frac{\e}{f(\f)-\e}
\right) \\
&\le |f'(\f)|\left(
\frac\e{f(\f)}+\frac{\e}{f(\f)-\e}
\right) \\
&\le M_2\f^{-\a-1}[\log\f]^l\left(
\frac\e{\underline m\f^{-\a}[\log\f]^l}+\frac{\e}{\underline m\f^{-\a}[\log\f]^l-\e}
\right) \\
&= \e\cdot M_2\frac{\f^{-1}}{[\log\f]^l}\left(
\frac1{\underline m}+\frac1{\underline m-\e\cdot\frac{\f^{\a}}{[\log\f]^l}}
\right).
\end{align*}
Since $\f_2(\e)\le\ov c\cdot\e^{-1/(1+\a-\d)}$, see the proof of Theorem \ref{F}, 
we have
\[
\e\cdot\frac{\f^\a}{[\log\f]^l}\le\e\cdot\f^\a\le\e\cdot\f_2(\e)^\a
\le\ov c^\a\e^{(1-\d)/{(1+\a-\d)}}\le\frac 12\underline m
\]
for all $\e\in(0,\e_0)$, provided
$\e_0$ is sufficiently small. Therefore,
\bgeq
D\le\e\cdot\frac{3M_2}{\underline m\,\f_1[\log\f_1]^l}<\e,
\endeq
where we assume that $\f_1$ is sufficiently large: $\f_1>3M_2/\underline m$.

The second condition in Theorem~\ref{FI} is also fulfilled. Indeed, since 
$\underline c\cdot\e^{-1/(1+\a)}\le\f_2(\e)\le\ov c\cdot\e^{-1/(1+\a-\ov\d)}$, where $\ov\d:=\sup_{\e\in(0,\e_0)} \d(\e)$ and $\d(\e)$ being defined as in the proof of Theorem \ref{F}, we conclude that
\begin{align*}
\underline m\ov c^{-\a}\e^{\a/(1+\a-\ov\d)} &\le \underline m\ov c^{-\a}\e^{\a/(1+\a-\ov\d)}\left[\log\left(\ov c \cdot \e^{-1/(1+\a-\ov\d)}\right)\right]^l \\
&\le f(\f_2(\e))\le\ov m\underline c^{-\a}\e^{\a/(1+\a)},
\end{align*}
that is, $\underline C\e^q\le f(\f_2(\e))\le\ov C\e^{1-\frac d2}$, where $q:=\a/(1+\a-\ov\d)$.
Therefore, by Theorem~\ref{FI} we have that $\ov\dim_B\C=\ov\dim_B(\C,rad)$.
Now from this, using (\ref{dimrad}), Theorems~\ref{F} and~\ref{FI}, we obtain 
$$
\frac2{1+\a}=\underline\dim_B(\C,rad)\le
\underline\dim_B\C\le\ov\dim_B\C=\ov\dim_B(\C,rad)=\frac2{1+\a}.
$$ 
This proves (\ref{drad}).

(b)
To prove (\ref{Mrad}) it suffices to check that for all $\e\in(0,\e_0)$,
\bgeq\label{c}
|\C(\f_1,\ty)_{\e,rad}|-O(\e^2)\le|\C(\f_1,\ty)_\e|.
\endeq
Indeed, since $\C(\f_1,\ty)_{\e,rad}\stq\C(\f_1,\ty)_\e\cup A(\e)$,
where $A(\e)$ is the area of the part of $\C(\f_1,\ty)_\e$ corresponding to $\f<\f_1$. This area is clearly of order $O(\e^2)$ since it is contained in the disk $B_{\e}(T_1)$, 
where $T_1$  is the point on $\C$ corresponding to $\f_1$.

From (\ref{c}) we have $\M^{*s}(\C,rad)\leq\M^{*s}(\C)$, for all $s\geq 0$. From Theorem \ref{F} it follows $\M^{*d}(\C,rad)=+\infty$, hence $\M^{*d}(\C)=+\infty$.
\end{proof}

\paragraph{Acknowledgments.}
This research was supported by: Croatian Science Foundation (HRZZ) under the project IP-2014-09-2285, French ANR project STAAVF 11-BS01-009, French-Croatian bilateral Cogito project \emph{Classification de points fixes et de singularit\'es }\`a\emph{ l'aide d'epsilon-voisinages d'orbites et de courbes}, and University of Zagreb research support for 2015 and 2016.

\bibliographystyle{plain}
\bibliography{bibliografija}

\end{document}